\journalname{Journal of Scientific Computing}
\newcommand{\J}[1]{{J}_{#1}}
\newcommand{\vv}[1]{ v_{#1}^{(n)} }
\newcommand{\ds}{\displaystyle}
\newcommand{\vt}[1]{{v_{#1}^{(n)}}^\top}
\newtheorem{assumption}{Assumption}
\begin{document}

\title{Accelerated high-index saddle dynamics method for searching high-index saddle points}

\author{
	Yue Luo \and Xiangcheng Zheng \and Lei Zhang 
}

\titlerunning{Accelerated high-index saddle dynamics}

\authorrunning{Y. Luo, X. Zheng, L. Zhang}

\institute{Y. Luo \at
	Beijing International Center for Mathematical Research, Peking University, Beijing 100871\\
	\email{moonluo@pku.edu.cn}
	\and 
	X. Zheng \at 
	School of Mathematics, Shandong University, Jinan 250100, China\\
	\email{xzheng@sdu.edu.cn}
        \and
	L. Zhang \at
	Beijing International Center for Mathematical Research, Center for Machine Learning Research, Center for Quantitative Biology, Peking University, Beijing 100871, China\\	\email{zhangl@math.pku.edu.cn}
}

\date{Received: date / Accepted: date}

\maketitle
\begin{abstract}
The high-index saddle dynamics (HiSD) method [J. Yin, L. Zhang, and P. Zhang, {\it SIAM J. Sci. Comput., }41 (2019), pp.A3576-A3595] serves as an efficient tool for computing index-$k$ saddle points and constructing solution landscapes. Nevertheless, the conventional HiSD method often encounters  slow convergence rates on ill-conditioned problems. To address this challenge, we propose an accelerated high-index saddle dynamics (A-HiSD) by incorporating the heavy ball method. We prove the linear stability theory of the continuous A-HiSD, and subsequently  estimate the local convergence rate for the discrete A-HiSD. Our analysis demonstrates that the A-HiSD method exhibits a faster convergence rate compared to the conventional HiSD method, especially when dealing with ill-conditioned problems. We also perform various numerical experiments including the loss function of neural network to substantiate the effectiveness and acceleration of the A-HiSD method.

\keywords{rare event, saddle point, heavy ball, acceleration, solution landscape}

\subclass{37M05 \and 37N30 \and 65B99 \and 65L20}

 \end{abstract}

\section{Introduction}
Searching saddle points in various complex systems has attracted attentions in many scientific fields, such as finding critical nuclei and transition pathways in phase transformations \cite{cheng2010nucleation,Han2019transition,samanta2014microscopic,wang2010phase,zhang2007morphology},  protein folding \cite{burton1997energy,wales2003energy}, Lenard-Jones clusters \cite{cai2018single,wales1997global}, loss landscape analysis in deep learning \cite{NIPS2014_17e23e50,NEURIPS2019_a4ee59dd,NEURIPS2021_7cc532d7}, excited states in Bose-Einstein condensates \cite{yin2023revealing}, etc. A typical example is that the transition state is characterized as the index-1 saddle point of the energy function, i.e. a critical point where the Hessian contains only one negative eigenvalue. Complex structure of saddle points makes it more challenging to design saddle searching algorithms than optimization methods.

 In general, there are two classes of approaches for finding saddle points: path-finding methods and surface walking methods. The former class is suitable for locating index-1 saddle points when initial and final states on the energy surface are available, such as the string method \cite{weinan2007simplified} and the nudged elastic band method \cite{henkelman2000improved}. It searches a string connecting the initial and final points to find the minimum energy path. The latter class, which is similar to optimization methods,  starts from a single point and utilizes first-order or second-order derivatives to search saddle points. Representative methods include the gentlest ascent method \cite{gad}, the eigenvector-following method \cite{cerjan1981finding}, the minimax method \cite{li2001minimax}, the activation-relaxation techniques \cite{cances2009some}, and the dimer type method \cite{gould2016dimer,henkelman1999dimer,zhangdu2012,zhang2016optimization}. Some of those methods have been extended to the calculation of index-$k$ ($k\in\mathbb{N}$) saddle points \cite{gao2015iterative,gusimplified,quapp2014locating,yin2019high}.

Recently, high-index saddle dynamics (HiSD) method \cite{yin2019high} severs as a powerful tool for the calculation of high-index saddle points and the construction of solution landscape when combined with the downward and upward algorithms \cite{YinPRL,jcm2023}. It has been successfully extended for the saddle point calculations in non-gradient systems \cite{yin2021searching} and constrained systems \cite{yin2022constrained,scm2023}. The HiSD method and solution landscape have been widely used in physical and engineering problems, including finding the defect configurations in liquid crystals \cite{Han2021,han2021solution,shi2023nonlinearity,wang2021modeling,yin2022solution}, nucleation in quasicrystals \cite{Yin2020nucleation}, morphologies of confined diblock copolymers \cite{xu2021solution}, excited states in rotating Bose-Einstein condensates \cite{yin2023revealing}.

It was demonstrated in \cite{yin2019high} that a linearly stable steady state of the HiSD corresponds to an index$-k$ saddle point, that is, a critical point of energy function where the Hessian has and only has $k$ negative eigenvalues. Furthermore, \cite{luo2022convergence} proved that the local linear convergence rate of the discrete HiSD method is $1-\mathcal{O}(\frac{1}{\kappa})$, where $\kappa\geq 1$ is the  ratio of absolute values of maximal and minimal eigenvalues of the Hessian at the saddle point, reflecting the curvature information around this saddle point. In practice, many examples show that HiSD method suffers from the slow convergence rate if the problem is ill-conditioned, that is, $\kappa$ is large. Hence, it is meaningful to design reliable acceleration mechanisms for the HiSD method to treat ill-conditioned problems.  

The heavy ball method (HB) is a simple but efficient acceleration strategy that is widely applied in the field of optimization. It integrates the information of previous search direction, i.e. the momentum term $\gamma(x^{(n)} - x^{(n-1)})$, in the current step of optimization. Polyak pointed out that invoking the HB method could improve the local linear convergence rate of the gradient descent method \cite{polyak1964some}. Till now, the HB method has become a prevalent acceleration strategy in various situations. For instance, many adaptive gradient methods in deep learning such as the Adam \cite{kingma2014adam}, AMSGrad \cite{reddi2019convergence} and AdaBound \cite{Luo2019AdaBound} adopt the idea of HB method. Nguyen et al. \cite{nguyen} proposed an accelerated residual method by combing the HB method with an extra gradient descent step and numerically verified the efficiency of this method. 

In this paper, we propose an accelerated high-index saddle dynamics (A-HiSD) method based on the HB method to enhance the convergence rate of the conventional HiSD method, especially  for ill-conditioned problems. At the continuous level, we prove that a linearly stable steady state of the A-HiSD is an index-$k$ saddle point, showcasing the effectiveness of A-HiSD in searching saddle points. The key ingredient of our proof relies on technical eigenvalue analysis of the Jacobian operator of the dynamical system. Subsequently, we propose and analyze the discrete scheme of A-HiSD, indicating a faster local convergence rate in comparison with the original HiSD method. Novel induction hypothesis method and matrix analysis methods are applied to accommodate the locality of the assumptions in the problem. 
To substantiate the effectiveness and acceleration capabilities of A-HiSD, we conduct a series of numerical experiments, including benchmark examples and ill-conditioned problems such as the loss function of neural networks. These results provide mathematical and numerical analysis support for application of the A-HiSD in saddle point search for ill-conditioned problems.

The paper is organized as follows: in Section 2 we review the HiSD method briefly and present a detailed description of the A-HiSD method. Stability analysis of the A-HiSD dynamical system is discussed in Section 3. Local convergence rate estimates of the proposed algorithm are proved in Section 4. Several experiments are presented to substantiate the acceleration of the proposed method in Section 5. Conclusions and further discussions are given in Section 6.   

\section{Formulation of A-HiSD}
Let $E:\mathbb{R}^d\rightarrow \mathbb{R}$ be a real valued, at least twice differentiable energy function defined on $d-$dimensional Euclidean space. We denote $\|\cdot\|_2:\mathbb{R}^{d\times d}\rightarrow \mathbb{R}$ for the operator norm of $d\times d$ real matrices, i.e.  
$\|A\|_2=\max_{\|x\|_2=1}\|Ax\|_2,~~x\in\mathbb{R}^d$ and the vector 2-norm $\|x\|^2_2: = \sum_{i=1}^d x_i^2.$ We also denote $x^*$ as the non-degenerate index-$k$ saddle point, i.e., the critical point of $E$ ($\nabla E(x^*)=0$) where the Hessian $\nabla^2 E(x^*)$ is invertible and only has $k$ negative eigenvalues, according to the definition of Morse index \cite{milnor2016morse}.

\subsection{Review of HiSD}
HiSD method provides a powerful instrument to search any-index saddle points. The HiSD for an index-$k$ ($1\leq\ k \in\mathbb{N}$) saddle point of the energy function $E(x)$ reads \cite{yin2019high}
\begin{equation}
	\left\{
	\begin{aligned}
		\dot{x} & =- \beta\Big(I-2\sum_{i=1}^kv_iv_i^\top\Big)\nabla E(x),\\
		\dot{v_i} &=- \zeta\Big(I-v_iv_i^\top-2\sum_{j=1}^{i-1}v_jv_j^\top\Big)\nabla^2E(x)v_i,\ 1\leq i \leq k,
	\end{aligned}
	\right.
	\label{saddle_dyna}
\end{equation}
where $\beta$ and $\zeta$ are positive relaxation parameters.
The continuous formulation (\ref{saddle_dyna}) is discussed in details in \cite{yin2019high}, and the convergence rate and error estimates of the discrete algorithm are analyzed in \cite{luo2022convergence,zhang2022sinum}. 

\begin{algorithm}[H]
	\caption{HiSD for an index-$k$ saddle point}
	\begin{algorithmic}
        \STATE{{\bf Input:} $k\in\mathbb{N}$, $x^{(0)}\in\mathbb{R}^d$, $\big\{\hat v_i^{(0)}\big\}_{i=1}^k\subset \mathbb{R}^d$ satisfying ${{}\hat v_i^{(0)}}^\top \hat v_j^{(0)}=\delta_{ij}$.}
		\FOR{$n=0,1,\ldots,T-1$}
		\STATE {$\ds x^{(n+1)} = x^{(n)} - \beta\Big(I-2\sum_{i=1}^k\hat v_i^{(n)}{{}\hat v_i^{(n)}}^\top\Big)\nabla E(x^{(n)})$; }
		\STATE {$\big\{\hat v_i^{(n+1)}\big\}_{i=1}^k = \text{EigenSol}\big( \big\{\hat v_i^{(n)}\big\}_{i=1}^k, \nabla^2 E(x^{(n+1)}) \big)$.}
		\ENDFOR
        \RETURN $x^{(T)}$
	\end{algorithmic}
 \label{alg_ori}
\end{algorithm}
The discrete scheme of HiSD method is shown in Algorithm \ref{alg_ori}. ``EigenSol'' represents some eigenvector solver with initial values $\big\{\hat v_i^{(n)}\big\}_{i=1}^k$ to compute eigenvectors corresponding to the first $k$ smallest eigenvalues of $\nabla^2 E(x^{(n+1)})$, such as the simultaneous Rayleigh-quotient iterative minimization method (SIRQIT) \cite{longsine1980simultaneous} and locally optimal block preconditioned conjugate gradient (LOBPCG) method \cite{knyazev2001toward}.

\subsection{Algorithm of A-HiSD}
Inspired by the simple implementation and the efficient acceleration phenomenon of the HB method, we propose the A-HiSD method described in Algorithm \ref{hhisd}.
\begin{algorithm}[H]
	\caption{A-HiSD for an index-$k$ saddle point}
	\label{hhisd}
	\begin{algorithmic}
        \STATE{ {\bf Input:} 
	$k\in\mathbb{N}$, $x^{(0)}\in\mathbb{R}^d$, orthonormal vectors $\big\{v_i^{(0)}\big\}_{i=1}^k\subset \mathbb{R}^d$.}
		\STATE {Set $x^{(-1)}=x^{(0)}$.}
		\FOR{$n=0,1,\ldots,T-1$}
		\STATE {$\displaystyle x^{(n+1)} = x^{(n)} - \beta\Big(I-2\sum_{i=1}^kv_i^{(n)}{{}v_i^{(n)}}^\top\Big)\nabla E(x^{(n)}) + \gamma(x^{(n)}-x^{(n-1)})$;} 	
		\STATE {$\big\{v_i^{(n+1)}\big\}_{i=1}^k = \text{EigenSol}\big( \big\{v_i^{(n)}\big\}_{i=1}^k, \nabla^2 E(x^{(n+1)}) \big)$.}
		\ENDFOR
		\RETURN $x^{(T)}$
	\end{algorithmic}
\end{algorithm}
 
From the perspective of algorithm implementation, the only difference between A-HiSD method and HiSD method is the update formula of the position variable $x^{(n)}$. The A-HiSD method integrates the information of previous search direction in the current step, i.e. the momentum term $\gamma(x^{(n)} - x^{(n-1)})$, motivated by the HB method.  Due to this modification, a significant acceleration is expected with the proper choice of the step size parameter $\beta$ and momentum parameter $\gamma$. We will perform numerical analysis and numerical experiments in the following sections.

From the continuous level, the HB method is highly connected with a first-order ordinary differential equation system due to the presence of multiple steps \cite{nesterov2003introductory}. Following this idea, we introduce the continuous A-HiSD system for an index-$k$ saddle point: 

\begin{equation}
	\left\{
	\begin{aligned}
		&\dot{x} = m,\\
		&\dot{m} = -\alpha_1m - \alpha_2\Big(I-2\sum_{i=1}^kv_iv_i^\top\Big)\nabla E(x),\\ 
		&\dot{v}_i = -\zeta\Big(I - v_iv_i^\top - 2\sum_{j=1}^{i-1}v_jv_j^\top\Big)H(x,v_i,l),\ i=1,2,...,k, \\
		&\dot{l} = -l,
	\end{aligned}
	\right.
	\label{A-HiSD_dyna}
\end{equation}
where $H(x,v,l)$ is the dimer approximation for the Hessian-vector product $\nabla^2E(x)v$ \cite{henkelman1999dimer,yin2019high}
\begin{equation}
	\nabla^2 E(x)v\approx H(x,v,l) = \frac{1}{2l}\left[\nabla E(x+lv) - \nabla E (x-lv)\right].
	\label{dimer}
\end{equation}
Here $\alpha_1$, $\alpha_2$ and $\zeta$ are positive relaxation parameters. 
\begin{remark}
To see the relations between the continuous and discrete formulations of the position variable in (\ref{A-HiSD_dyna}) and the Algorithm \ref{hhisd}, respectively, we first combine the first two equations in (\ref{A-HiSD_dyna}) to obtain
$$\ddot{x} = -\alpha_1\dot{x} - \alpha_2\Big(I-2\sum_{i=1}^kv_iv_i^\top\Big)\nabla E(x). $$
Then we discretize this equation with the step size $\Delta t$ to obtain
$$\frac{x^{(n+1)}-2x^{(n)}+x^{(n-1)}}{(\Delta t)^2}=-\alpha_1\frac{x^{(n)}-x^{(n-1)}}{\Delta t}- \alpha_2\Big(I-2\sum_{i=1}^kv_i^{(n)}{v_i^{(n)}}^\top\Big)\nabla E(x^{(n)}). $$
If we set $\beta=\alpha_2(\Delta t)^2$ and $\gamma=1-\alpha_1\Delta t$, the scheme of the position variable in Algorithm \ref{hhisd} is consistent with the above scheme. Therefore, the formulation of the position variable in (\ref{A-HiSD_dyna}) is indeed the continuous version (or the limit case when $\Delta t\rightarrow 0^+$) of the A-HiSD algorithm.
\end{remark}

\section{Linear stability analysis of A-HiSD}
We prove that the linearly stable steady state of (\ref{A-HiSD_dyna}) is exactly an index-$k$ saddle point of $E$, which substantiates the effectiveness of (\ref{A-HiSD_dyna}) (and thus its discrete version in Algorithm \ref{hhisd}) in finding high-index saddle points.
\begin{theorem}
	Assume that $E(x)$ is a $C^3$ function, $\left\{v_i^*\right\}_{i=1}^k\subset\mathbb{R}^d$ satisfy $\|v_i^*\|_2=1$ for $1\leq i\leq k$. The Hessian $\nabla^2 E(x^*)$ is non-degenerate with the eigenvalues $\lambda_1^*<\ldots<\lambda_k^*<0< \lambda_{k+1}^*\leq\ldots\leq \lambda_d^*$, and $\alpha_1,\alpha_2,\zeta>0$ satisfy $\alpha_1^2\leq4\alpha_2\mu^*$ where $\mu^*=\min\{|\lambda_i^*|,\,i=1,\ldots,k\}$. Then the following two statements are equivalent:
\begin{itemize}
\item[(A)] $(x^*,m^*,v_1^*,...,v_k^*,l^*)$ is a linearly stable steady state of the A-HiSD (\ref{A-HiSD_dyna});

\item[(B)] $x^*$ is an index-$k$ saddle point of $E$, $\{v_i^*\}_{i=1}^k$ are eigenvectors of $\nabla^2E(x^*)$ with the corresponding eigenvalues $\{\lambda_i^*\}_{i=1}^k$ and $m^*=l^*=0$.
\end{itemize}
\end{theorem}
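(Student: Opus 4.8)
\noindent The plan is to treat (\ref{A-HiSD_dyna}) as a first-order autonomous system and run a linearization-and-spectrum analysis, exploiting a block structure of the Jacobian that decouples the momentum part from the direction part. First I would characterize the steady states: setting every right-hand side to zero gives $m^*=0$ (from $\dot x=m$) and $l^*=0$ (from $\dot l=-l$). At $l^*=0$ the dimer operator reduces to the exact Hessian-vector product, $H(x^*,v_i^*,0)=\nabla^2E(x^*)v_i^*$, so the $\dot v_i=0$ equations become $(I-v_i^*{v_i^*}^\top-2\sum_{j<i}v_j^*{v_j^*}^\top)\nabla^2E(x^*)v_i^*=0$. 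Arguing by induction on $i$, using the orthonormality of $\{v_i^*\}$ preserved by the flow, I would force each $v_i^*$ to be an eigenvector of $\nabla^2E(x^*)$: the $i=1$ relation gives $\nabla^2E(x^*)v_1^*\in\mathrm{span}\{v_1^*\}$, and once $v_1^*,\dots,v_{i-1}^*$ are eigenvectors the cross terms ${v_j^*}^\top\nabla^2E(x^*)v_i^*$ vanish, collapsing the $i$-th relation to $\nabla^2E(x^*)v_i^*\in\mathrm{span}\{v_i^*\}$. Hence $P^*:=I-2\sum_iv_i^*{v_i^*}^\top$ is an orthogonal reflection, thus invertible, and the surviving equation $\alpha_2P^*\nabla E(x^*)=0$ forces $\nabla E(x^*)=0$. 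So every steady state already has $m^*=l^*=0$, $\nabla E(x^*)=0$, and $\{v_i^*\}$ an orthonormal set of eigenvectors; linear stability then only has to decide which eigenvalues they carry and how many negative eigenvalues $\nabla^2E(x^*)$ may possess.

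\noindent Next I would linearize and read off the Jacobian in the ordering $(v_1,\dots,v_k,x,m,l)$. Two simplifications make its spectrum tractable. Expanding (\ref{dimer}) in $l$ gives $H(x,v,l)=\nabla^2E(x)v+O(l^2)$, so $\partial_lH|_{l^*=0}=0$ and the $l$-component decouples, contributing only the eigenvalue $-1$. Using $\nabla E(x^*)=0$, the derivative of the momentum equation with respect to each $v_j$ vanishes, so the $(x,m)$ rows carry no $v$-column; the Jacobian is therefore block upper-triangular for the splitting $\{v\}$ versus $\{x,m\}$, and its spectrum is the union of the spectrum of the direction block $A_{vv}$ (the linearization of the $\dot v_i$ subsystem, identical to conventional HiSD) with that of the heavy-ball block $\left(\begin{smallmatrix}0&I\\-\alpha_2P^*\nabla^2E(x^*)&-\alpha_1I\end{smallmatrix}\right)$.

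\noindent I would then analyze the two blocks. For the heavy-ball block, $P^*$ and $\nabla^2E(x^*)$ share an eigenbasis, so $C:=P^*\nabla^2E(x^*)$ is diagonalizable with eigenvalue $-\lambda_i^*$ on each $v_i^*$ and $\lambda_j^*$ on each remaining eigenvector; in this basis the block splits into $2\times2$ pieces with characteristic polynomial $s^2+\alpha_1s+\alpha_2c=0$, whose roots lie in the open left half-plane exactly when $c>0$. The damping hypothesis $\alpha_1^2\le4\alpha_2\mu^*$ further pins the eigenvalues on the negative-curvature directions to $-\tfrac{\alpha_1}{2}\pm i\sqrt{\alpha_2|\lambda_i^*|-\tfrac{\alpha_1^2}{4}}$, rendering their stability manifest. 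Demanding $C\succ0$ is exactly the statement that the $k$ eigenvalues carried by $\{v_i^*\}$ are the negative ones while every other eigenvalue of $\nabla^2E(x^*)$ is positive, i.e. $x^*$ has Morse index $k$ with $\{v_i^*\}$ spanning its unstable subspace. For the direction block $A_{vv}$ I would invoke the stability analysis of the HiSD $v$-subsystem from \cite{yin2019high}, which additionally forces the pairing $v_i^*\leftrightarrow\lambda_i^*$ with $\lambda_1^*<\cdots<\lambda_k^*$. Combining the two characterizations yields (A)$\Leftrightarrow$(B).

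\noindent The main obstacle will be the spectral analysis of the direction block $A_{vv}$ restricted to the tangent space of the orthonormality constraint: unlike the heavy-ball block it does not diagonalize against a fixed basis, and tying the signs of its eigenvalues to the ordering of the $\{\lambda_i^*\}$ needs the careful frame computation of the HiSD theory. The structural point on which everything hinges is the vanishing of the cross-coupling $\partial_{v}\dot m$ through $\nabla E(x^*)=0$, which is what splits the Jacobian into these two independently analyzable blocks.
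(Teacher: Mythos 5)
Your overall strategy coincides with the paper's: both exploit that $\nabla E(x^*)=0$ kills the coupling $\partial_{v_i}\dot m$ and that $\partial_l H(x,v,l)\to 0$ as $l\to 0^+$ kills the $l$-coupling, so the Jacobian at the steady state is block triangular, with a heavy-ball block analyzed through $2\times 2$ companion blocks in the Hessian eigenbasis (characteristic polynomial $x^2+\alpha_1 x-d_i^*$) and a direction block whose linearization is identical to that of conventional HiSD; the paper simply recomputes the direction-block spectrum explicitly, see (\ref{Jeig}), rather than citing \cite{yin2019high}.

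However, there is a genuine gap in your (A)$\Rightarrow$(B) direction: you assume the $\{v_i^*\}_{i=1}^k$ are mutually orthogonal, justified as ``orthonormality preserved by the flow.'' The theorem hypothesizes only $\|v_i^*\|_2=1$, and flow-invariance is irrelevant here: statement (A) concerns an arbitrary point of phase space at which the vector field vanishes and the linearization is stable; it is not presented as the long-time limit of a trajectory launched from orthonormal data, so there is no flow along which orthonormality could have been preserved. Without orthogonality your induction breaks at its key step --- the cross terms ${v_j^*}^\top\nabla^2E(x^*)v_i^* = z_j^*\,{v_j^*}^\top v_i^*$ need not vanish even when $v_1^*,\dots,v_{i-1}^*$ are eigenvectors --- and your subsequent conclusion $\nabla E(x^*)=0$ also fails, because $P^*=I-2\sum_i v_i^*{v_i^*}^\top$ is an orthogonal reflection (hence invertible) only when the $v_i^*$ are orthonormal; for instance, two unit vectors in the plane at angle $60^\circ$ make $I-2\sum_i v_iv_i^\top$ singular. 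The repair is exactly what the paper does between (\ref{relation}) and (\ref{i2}): orthogonality must be extracted from the steady-state relations inside the induction. From (\ref{relation}), $v_m^*$ is an eigenvector of $\mathcal P_{m-1}^*\nabla^2E(x^*)$, hence (using that $v_1^*,\dots,v_{m-1}^*$ are already known to be eigenvectors) an eigenvector of $\nabla^2E(x^*)$ itself; subtracting the two eigen-relations gives $\sum_{j<m}z_j^*({v_j^*}^\top v_m^*)v_j^*=0$, which together with $z_j^*\neq 0$ (non-degeneracy of the Hessian) yields ${v_j^*}^\top v_m^*=0$. With that step inserted, the rest of your argument (positive definiteness of $P^*\nabla^2E(x^*)$ forcing Morse index $k$, and the direction block forcing the pairing $v_i^*\leftrightarrow\lambda_i^*$) goes through as in the paper.
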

\begin{proof}
	To determine the linear stability of A-HiSD (\ref{A-HiSD_dyna}), we calculate its Jacobian operator as follows
\begin{equation}\label{J}
	\J{} = \frac{\partial(\dot{x},\dot{m},\dot{v}_1,\dot{v}_2,...,\dot{v}_k,\dot{l})}{\partial (x,m,v_1,v_2,...,v_k,l)} = 
	\begin{bmatrix}
		0 &I &0 &0 &\ldots &0 & 0\\
		\J{mx} &-\alpha_1 I &\J{m1} &\J{m2}&\ldots &\J{mk} &0\\
		\J{1x} &0 &\J{11} &0 &\ldots &0 &\J{1l}\\
		\J{2x} &0 &\J{21} &\J{22} &\ldots&0 &\J{2l}\\
		\vdots &\vdots &\vdots &\vdots &\vdots &\vdots &\vdots\\
		\J{kx} &\J{km} &\J{k1} &\J{k2} &\cdots &\J{kk} &\J{kl}\\
		0 &0 &0 &0 &0 &0 &-1
	\end{bmatrix},
\end{equation}
where part of the blocks are presented in details as follows
	\begin{align*}
		\J{mx} &= \partial_x \dot{m} = -\alpha_2\mathcal P_k\nabla^2 E(x),\\
		\J{mi} &= \partial_{v_i} \dot{m} = 2\alpha_2\left  (v_i^\top \nabla E(x)I + v_i\nabla E(x)^\top\right ),\\
		J_{ii} & = \partial_{v_i} \dot{v}_i = -\zeta\big(\mathcal P_{i-1} - v_iv_i^\top \big)\partial_{v_i} H(x,v_i,l)+ \zeta(v_i^\top H(x,v_i,l)I + v_iH(x,v_i,l)^\top),\\
		\J{il} & = -\zeta\big(\mathcal P_{i-1} - v_iv_i^\top \big)\partial_l H(x,v_i,l).
	\end{align*}
Here we use the notation $\mathcal P_s:=I-2\sum_{j=1}^s v_jv_j^\top$ for $1\leq s\leq k$ and $\mathcal P_{0} = I$. Derivatives of $H(x,v_i,l)$ with respect to different variables are  
	\begin{align*}
		&\partial_l H(x,v_i,l) = \frac{\nabla^2 E(x+lv_i) + \nabla^2 E(x-lv_i)}{2l}v_i  - \frac{\nabla E(x+lv_i) - \nabla E(x-lv_i)}{2l^2},\\
& \partial_{v_i} H(x,v_i,l)= \frac{\nabla^2E(x+lv_i)+\nabla^2E(x-lv_i)}{2}.
	\end{align*}
 By the smoothness of the energy function $E(x)$ and Taylor expansions, we could directly verify the following limits for future use
\begin{align}\label{Hlim}
&\lim_{l\rightarrow 0^+} H(x,v_i,l)=\nabla^2 E(x)v_i,~~ \lim_{l\rightarrow 0^+}\partial_l H(x,v_i,l) =0,~~\lim_{l\rightarrow 0^+}\partial_{v_i} H(x,v_i,l) =\nabla^2E(x).
\end{align}

We firstly derive (A) from (B). Under the conditions of (B), we have $\nabla E (x^*)=0$, $\nabla^2 E(x^*)v_i^*=\lambda_i^*v_i^*$ for $1\leq i\leq k$ and $m^*=l^*=0$ such that $(x^*, m^*, v_1^*, v_2^*, ..., v_k^*, l^*)$ is a steady state of (\ref{A-HiSD_dyna}). To show the linear stability, we remain to show that all eigenvalues of $J^*$, the Jacobian (\ref{J}) at this steady state, have negative real parts. From $\nabla E(x^*)=0$ and $l^*=0$, we observe that $\J{mi}^*$ and $\J{il}^*$ are zero matrices such that $\J{}^*$ is in the form of 
\begin{equation}
	\J{}^* = 
	\begin{bmatrix}
		 Z_1^* &0\\
		 G^* &Z_2^*
	\end{bmatrix}
\label{Jform}\mbox{ with }Z_1^* := \begin{bmatrix}
		0 &I\\
		\J{mx}^* &-\alpha_1 I
	\end{bmatrix}
\mbox{ and }
    Z_2^* := \begin{bmatrix}
    	 \J{11}^* &0 &\ldots &0 &0\\
    	 \J{21}^*&0 &\ldots &0 &0\\
    	 \vdots  	& \vdots 	& \vdots 	& \vdots 	& \vdots\\
    	 \J{k1}^* &\J{k2}^* &\ldots &\J{kk}^* &0\\
    	0 &0 &0 &0 &-1
    \end{bmatrix}.
\end{equation}
Therefore the spectrum of $\J{}^*$ is completely determined by $Z_1^*$ and $Z_2^*$. By (\ref{Hlim}) and the spectrum decomposition theorem $\nabla^2E(x^*)=\sum_{i=1}^d\lambda^*_iv_i^*{v_i^*}^\top$, the diagonal block $\J{ii}^*$ of $Z_2^*$ could be expanded as 
\begin{align*}
	\J{ii}^* &= -\zeta\big(\mathcal P_{i-1}^* - v_i^*{v_i^*}^\top \big)\nabla^2E(x^*)+ \zeta({v_i^*}^\top \nabla^2E(x^*)v_i^*I + v_i^*(\nabla^2E(x^*)v_i^*)^\top)\\
&= -\zeta\bigg(  \nabla^2 E(x^*) -  2\sum_{j=1}^i\lambda_j^*v_j^*{v_j^*}^\top -\lambda_i^*I \bigg),
\end{align*} 
which means $J_{ii}^*$ is equipped with the following eigenvalues 
\begin{equation}\label{Jeig}
\left\{ \zeta(\lambda_1^*+\lambda_i^*),\ldots \zeta(\lambda_i^*+\lambda_i^*), \zeta(\lambda_i^* - \lambda_{i+1}^*), \dots , \zeta(\lambda_i^* - \lambda_{d}^*)\right\},\quad 1\leq i\leq k.
\end{equation}
By the assumptions of this theorem, all these eigenvalues are negative such that all eigenvalues of $Z_2^*$ are negative. Then we apply the expansion of $\J{mx}^*$
\begin{equation}\label{Jmx}
	\J{mx}^* = -\alpha_2\bigg( -\sum_{i=1}^k \lambda_i^* v_i^* {v_i^*}^\top + \sum_{j=k+1}^d \lambda_j^* v_j^* {v_j^*}^\top\bigg)
\end{equation}
to partly diagonalize $Z_1^*$ as
\begin{equation}
	\begin{bmatrix}
		V^* &0\\
		0 &V^*
	\end{bmatrix}^\top Z_1^* 
     \begin{bmatrix}
     	V^* &0\\
     	0 &V^*
     \end{bmatrix} = \begin{bmatrix}
      0 &I\\
      D^* &-\alpha_1 I
 \end{bmatrix}=:\tilde{Z}_1^*,
\label{dis1}
\end{equation}
where $V^* = [v_1^*,\ldots, v_d^*]$ is the orthogonal matrix, $D^*:=\mbox{diag}\{d_i^*\}_{i=1}^d$ is the diagonal matrix with the entries  $\{\alpha_2\lambda_1^*,\ldots,\alpha_2\lambda_k^*, -\alpha_2 \lambda_{k+1}^*,\ldots, -\alpha_2\lambda_d^*\}$ and, after a series of permutations, $\tilde{Z}_1^*$ is similar to a block diagonal matrix with $d$ blocks defined by
\begin{equation}
 D_i^* = 
\begin{bmatrix}
	0 &1\\
	d_i^* &-\alpha_1
\end{bmatrix},~~1\leq i\leq d.
\label{dis2}
\end{equation}
The eigenvalues of $D_i^*$ are roots of $x^2 + \alpha_1x -d_i^* = 0$. Since $d_i^*\leq -\alpha_2\mu^*<0$ for $1\leq i\leq d$, we have $\Delta_i = \alpha_1^2 +4d_i^* \leq \alpha_1^2 -4\alpha_2\mu^*\leq 0 $, which implies that the roots of $D_i^*$ are either two same real numbers or conjugate complex numbers. In both cases, the real parts of the roots are negative due to $\alpha_1>0$, which in turn implies that all eigenvalues of $\tilde{Z}_1^*$ have negative real parts. In conclusion, the real parts of all eigenvalues of $\J{}^*$ are negative, which indicates that $(x^*, m^*, v_1^*, v_2^*, ..., v_k^*, l^*)$ is a linearly stable steady state of (\ref{A-HiSD_dyna}) and thud proves (A).

To prove (B) from (A), we suppose that $(x^*, m^*, v_1^*, v_2^*, ..., v_k^*, l^*)$ is a linearly stable steady state of (\ref{A-HiSD_dyna}), which leads to $m^*=l^*=0$ and
\begin{equation}
	(\mathcal P_{i-1}^* - v_i^*{v_i^*}^\top )\nabla^2 E(x^*)v_i^* = 0
	 \label{relation}
\end{equation}
for $1\leq i\leq k$. We intend to apply the mathematical induction to prove
\begin{equation}
	\nabla^2 E (x^*) v_i^*= z_i^*v_i^*,\quad z_i = {v_i^*}^\top \nabla^2 E(x^*)v_i^*\neq 0,\quad {v_i^{*}}^\top v_j^*=0, \quad 1\leq j<i\leq k,
	\label{eigen}
\end{equation}
i.e. $\{(z_i^*,v_i^*)\}_{i=1}^k$ are eigen pairs of $\nabla^2 E(x^*)$.
For (\ref{relation}) with $i=1$,  we have $\nabla^2 E(x^*)v_1^* = z_1^*v_1^*$ with $z_1^* = {v_1^*}^\top \nabla^2 E(x^*)v_1^*$ being an eigenvalue of $\nabla^2 E(x^*)$. Since $\nabla^2 E(x^*)$ is non-degenerate, $z^*_1\not = 0$. (\ref{eigen}) holds true at $i=1$. Assume that (\ref{eigen}) holds for $i=1,2,...,m-1$. Then (\ref{relation}) with $i=m$ yields
\begin{equation}
	\mathcal P_{m-1}^* \nabla^2 E(x^*)v_m^*=\bigg (\nabla^2 E(x^*)  - 2\sum_{j=1}^{m-1}z_j^*v_j^*{v_j^*}^\top\bigg )v_m^* = z_m^*v_m^*,
	\label{i2}
\end{equation} where $z_m^* = {v_m^*}^\top \nabla^2 E(x^*)v_m^*$, which implies $v_m^*$ is the eigenvector of $\mathcal P_{m-1}^* \nabla^2 E(x^*)$. However, since $\{v_j^*\}_{j=1}^{m-1}$ are eigenvectors of $\nabla^2 E (x^*)$, $\nabla^2 E (x^*)$ and $\mathcal P_{m-1}^* \nabla^2 E(x^*)$ share the same eigenvectors such that $v_m^*$ is also an eigenvector of $\nabla^2 E (x^*)$ and there exists an eigenvalue $\mu_m^*$ such that $\nabla^2 E(x^*)v_m^* = \mu_m^*v_m^*$. we multiply ${v_m^*}^\top$ on both sides of this equation and utilize $\|v_m^*\|_2=1$ to obtain ${v_m^*}^\top \nabla^2 E(x^*)v_m^* = \mu_m^*\|v_m^*\|_2^2 = \mu_m^*$, that is, $z_m^*=\mu_m^*$. Combining (\ref{i2}) and $\nabla^2 E(x^*)v_m^* = z_m^*v_m^*$, we have $\sum_{j=1}^{m-1}z_j^*({v_j^*}^\top v_m^* )v_j^* = 0$, and we multiply $(v_s^*)^\top$ for $1\leq s\leq m-1$ on both sides of this equation and apply (\ref{eigen}) to get ${v_s^*}^\top v_m^*=0$ for $ 1\leq s\leq m-1$. Therefore, (\ref{eigen}) holds for $i=m$ and thus for any $1\leq i\leq k$.

As a result of (\ref{eigen}), $\mathcal P_k$ is an orthogonal matrix, which, together with the right-hand side of the dynamics of $m$ in (\ref{A-HiSD_dyna}) with $m^*=0$, indicates $\nabla E (x^*)=0$, i.e. $x^*$ is a critical point of $E$.

It remains to check that $x^*$ is an index-$k$ saddle point of $E$. From $l^*=0$ and $\nabla E (x^*)=0$, $J^*$ could be divided as (\ref{Jform}) and the linear stability condition in (A) implies that all eigenvalues of $Z_1^*$ and $Z_2^*$ have negative real parts. Since (\ref{eigen}) indicates that $\{(z_i^*, v_i^*)\}_{i=1}^k$ are eigen-pairs of $\nabla^2 E(x^*)$, we denote $z_{k+1}^*\leq \ldots \leq z_d^*$ as the remaining eigenvalues of $\nabla^2 E(x^*)$. By the derivation of (\ref{Jmx}), eigenvalues of 
$\J{mx}^*$
are $\{\alpha_2z_1^*,\dots,\alpha_2z_k^*, -\alpha_2z_{k+1}^*,\ldots, -\alpha_2z_d^*\}$. Following the discussions among (\ref{dis1})--(\ref{dis2}), eigenvalues of $Z_1^*$ are roots of $
	x^2 + \alpha_1 x -d_i^*=0$ for $1\leq i\le d$ with $d_i^* =	\alpha_2z_i^*$ for $i\leq k$ and $d_i^*=-\alpha_2z_i^*$ for $k< i\leq d$.
To ensure that all eigenvalues have negative real parts, $d_i^*$ must be negative, which implies $z_i^*<0$ for $i\leq k$ and $z_i^*>0$ for $k<i\leq d$, that is, $x^*$ is an index-$k$ saddle point. 

Finally, it follows from (\ref{Jeig}) that
$\{\zeta(z_i^*+z^*_1),\dots,\zeta(z_i^*+z^*_i), \zeta(z_i^*-z^*_{i+1}),\dots, \zeta(z_i^*$ $-z^*_d)\}$ are eigenvalues of $\J{ii}^*$ for $1\leq i\leq k$. To ensure that all eigenvalues are negative, we have $z^*_1<z^*_2<\dots<z^*_k$ such that $z_i^*$ match $\lambda_i^*$, i.e., $z_i^*=\lambda_i^*$ for $1\leq i\leq k$, which proves (B) and thus completes the proof.
\end{proof}

\section{Local convergence analysis of discrete A-HiSD}
In this section, we present numerical analysis of the discrete Algorithm \ref{hhisd}. Results on some mild conditions indicates that our proposed method has a faster local convergence rate compared with the vanilla HiSD method.
We introduce the following assumptions for numerical analysis, which are standard to analyze the convergence behavior of iterative algorithms \cite{luo2022convergence,nesterov2003introductory}.
\begin{assumption}
	The initial position $x^{(0)}$ locates in a neighborhood of the index-$k$ saddle point $x^*$, i.e., $x^{(0)}\in U(x^*,\delta) =\{x|\|x-x^*\|_2<\delta\}$ for some $\delta>0$ such that
	\begin{enumerate}
		\item[(i)] There exists a constant $M>0$ such that $\|\nabla^2 E(x)-\nabla^2 E(y)\|_2\leq M\|x-y\|_2$ for all $ x,y\in U(x^*,\delta)$;

        \item[(ii)] For any $x\in U(x^*,\delta)$, eigenvalues $\{\lambda_i\}_{i=1}^d$ of $\nabla^2E(x)$ satisfy $\lambda_1\leq\cdots\leq \lambda_k<0< \lambda_{k+1}\leq\cdots\leq\lambda_d$ 
        and there exist positive constants $0<\mu< L$ such that $|\lambda_i|\in[\mu,L]$ for $1\leq i\leq d$.
	
	\end{enumerate}
	\label{original_asm}
\end{assumption}

Assumption \ref{original_asm} leads to an essential corollary on the Lipschitz continuity of the orthogonal projection on eigen subspace, which is stated in Corollary \ref{subspace}. The core of its  proof is the Davis-Kahan theorem \cite{stewart1990matrix}, which captures the sensitivity of eigenvectors under matrix perturbations. Here we adopt an variant of the Davis-Kahan theorem stated as following.
\begin{theorem}\cite[Theorem 2]{10.1093/biomet/asv008}\label{dkthm}
    Let $\Sigma$, $\hat{\Sigma}\in \mathbb{R}^{d\times d}$ be symmetric, with eigenvalues $\lambda_d\geq\ldots\geq\lambda_1$ and $\hat{\lambda}_d\geq\ldots\geq\hat{\lambda}_1$ respectively. Fix $1\leq r \leq s \leq d$ and assume that $\min(\lambda_{r}-\lambda_{r-1}, \lambda_{s+1}-\lambda_{s})>0$, where we define $\lambda_0=-\infty$ and $\lambda_{d+1} = +\infty$.  Let $p=s-r+1$, and let $V=[v_r, v_{r+1},...,v_s]\in\mathbb{R}^{d\times p}$ and $\hat{V}=[\hat{v}_r, \hat{v}_{r+1},...,\hat{v}_s]\in\mathbb{R}^{d\times p}$ have orthonormal columns satisfying $\Sigma v_j=\lambda_j v_j$ and $\hat{\Sigma} \hat{v}_j=\hat{\lambda}_j \hat{v}_j$ for $j=r,r+1,...s$. Then 
    \begin{equation*}
        \|VV^\top - \hat{V}\hat{V}^\top\|_{F} \leq \frac{2\sqrt{2}\min(p^{1/2}\|\hat{\Sigma}-\Sigma\|_2,\|\hat{\Sigma}-\Sigma\|_F)}{\min(\lambda_{r}-\lambda_{r-1}, \lambda_{s+1}-\lambda_{s})},
    \end{equation*}
    where $\|\cdot\|_F$ is the matrix Frobenius norm, i.e. $\|A\|_F = \sqrt{\text{tr}(AA^\top)}$.
\end{theorem}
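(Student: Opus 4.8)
The plan is to convert the projector distance on the left-hand side into a principal-angle quantity and then control it through a Sylvester equation relating the two eigenbases. Write $\delta:=\min(\lambda_r-\lambda_{r-1},\lambda_{s+1}-\lambda_s)$ and $\epsilon:=\|\hat\Sigma-\Sigma\|_2$, and let $V_\perp\in\mathbb{R}^{d\times(d-p)}$ collect the eigenvectors of $\Sigma$ whose indices lie outside $\{r,\dots,s\}$, so that $[V,\,V_\perp]$ is orthogonal. First I would record the standard identity $\|VV^\top-\hat V\hat V^\top\|_F=\sqrt{2}\,\|V_\perp^\top\hat V\|_F$, valid because the singular values of $V_\perp^\top\hat V$ are exactly the sines of the principal angles between $\mathrm{range}(V)$ and $\mathrm{range}(\hat V)$. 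This reduces the claim to the sharper-looking estimate $\|V_\perp^\top\hat V\|_F\le\tfrac{2}{\delta}\min(p^{1/2}\epsilon,\|\hat\Sigma-\Sigma\|_F)$.

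Next I would derive a Sylvester identity. Setting $W:=V_\perp^\top\hat V$ and using $\Sigma V_\perp=V_\perp\Lambda_\perp$ together with $\hat\Sigma\hat V=\hat V\hat\Lambda$, where $\Lambda_\perp$ and $\hat\Lambda$ are the diagonal matrices of the corresponding eigenvalues, a direct computation gives
\[
\Lambda_\perp W-W\hat\Lambda=V_\perp^\top(\Sigma-\hat\Sigma)\hat V=:R.
\]
Reading this entrywise yields $(\lambda_i-\hat\lambda_j)W_{ij}=R_{ij}$ for every complementary index $i$ and every block index $j\in\{r,\dots,s\}$. The numerator is harmless: since $V_\perp$ and $\hat V$ have orthonormal columns, $\|R\|_F\le\|(\Sigma-\hat\Sigma)\hat V\|_F\le\min(p^{1/2}\epsilon,\|\hat\Sigma-\Sigma\|_F)$, the two estimates following from $\|AB\|_F\le\|A\|_2\|B\|_F$ (with $\|\hat V\|_F=p^{1/2}$) and $\|AB\|_F\le\|A\|_F\|B\|_2$ (with $\|\hat V\|_2=1$) respectively.

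The heart of the argument is to lower-bound the eigenvalue gaps $|\lambda_i-\hat\lambda_j|$ by the \emph{population} gap $\delta$, which is where Weyl's inequality $|\hat\lambda_j-\lambda_j|\le\epsilon$ enters. Splitting the complementary indices into those below the block ($i\le r-1$, so $\lambda_i\le\lambda_{r-1}$ while $\hat\lambda_j\ge\lambda_r-\epsilon$) and those above it ($i\ge s+1$, so $\lambda_i\ge\lambda_{s+1}$ while $\hat\lambda_j\le\lambda_s+\epsilon$), one gets $|\lambda_i-\hat\lambda_j|\ge\delta-\epsilon$ in every case. Dividing entrywise then gives $\|W\|_F\le\|R\|_F/(\delta-\epsilon)$ whenever $\epsilon<\delta$.

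The remaining and most delicate point is to upgrade the denominator $\delta-\epsilon$, which still depends on the perturbation size, to the clean $\delta/2$ that produces the stated constant $2\sqrt{2}$. I would argue by cases on $\epsilon$. If $\epsilon\le\delta/2$, then $\delta-\epsilon\ge\delta/2$ and the previous bound immediately yields $\|V_\perp^\top\hat V\|_F\le\tfrac{2}{\delta}\min(p^{1/2}\epsilon,\|\hat\Sigma-\Sigma\|_F)$, hence the theorem. If $\epsilon>\delta/2$, the gap estimate degenerates, and one instead combines the a priori bound $\|V_\perp^\top\hat V\|_F=\|\sin\Theta\|_F\le\min(p,d-p)^{1/2}$ with $\epsilon>\delta/2$ to verify the claimed inequality directly. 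I expect this large-perturbation regime to be the main obstacle: showing that the crude bound is compatible with the \emph{Frobenius}-norm component of the minimum (and not only with the operator-norm component) requires exploiting that a large subspace rotation forces a correspondingly large $\|\hat\Sigma-\Sigma\|_F$. It is precisely this case split that replaces the sample-dependent gap of the classical Davis--Kahan theorem by the purely population gap $\delta$ and accounts for the extra factor of two.
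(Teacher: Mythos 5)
The paper itself offers no proof of this statement: it is imported verbatim (with relabelled indices) from Yu, Wang and Samworth \cite[Theorem 2]{10.1093/biomet/asv008} and used as a black box in Corollary 2, so your proposal has to be judged against the content of that theorem rather than an in-paper argument. Most of your steps are correct: the identity $\|VV^\top-\hat{V}\hat{V}^\top\|_F=\sqrt{2}\,\|V_\perp^\top\hat{V}\|_F$, the Sylvester relation $\Lambda_\perp W-W\hat{\Lambda}=V_\perp^\top(\Sigma-\hat{\Sigma})\hat{V}$, the residual bound $\|R\|_F\leq\min(p^{1/2}\|\hat{\Sigma}-\Sigma\|_2,\|\hat{\Sigma}-\Sigma\|_F)$, the Weyl-based gap estimate $|\lambda_i-\hat{\lambda}_j|\geq\delta-\epsilon$, and the entire regime $\epsilon\leq\delta/2$ are sound; indeed for $p=1$ your argument is complete.

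The genuine gap is exactly the point you flag and then leave open: the Frobenius-norm component of the minimum when $\epsilon>\delta/2$. Your fallback, $\|V_\perp^\top\hat{V}\|_F\leq\min(p,d-p)^{1/2}$ combined with $\|\hat{\Sigma}-\Sigma\|_F\geq\epsilon>\delta/2$, provably cannot close this case: if $p\geq2$ and $E:=\hat{\Sigma}-\Sigma$ is, say, rank one with $\delta/2<\|E\|_2=\|E\|_F<\delta p^{1/2}/2$, you must show $\|V_\perp^\top\hat{V}\|_F\leq 2\|E\|_F/\delta$, which is strictly smaller than the a priori bound $\min(p,d-p)^{1/2}$ can deliver, and nothing in your argument supplies the needed ``large rotation forces large $\|E\|_F$'' statement. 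This regime is not a fringe verification; it is the entire substance of the Yu--Wang--Samworth variant, since for $\epsilon<\delta/2$ the classical Davis--Kahan theorem with sample-dependent gaps already suffices. One way to close it (and to get the stated constant, which cruder splits such as ``good/bad eigenvalues plus Hoffman--Wielandt applied separately'' do not achieve) is via an auxiliary matrix: set $\hat{\Sigma}':=\sum_{j=1}^{d}\hat{\lambda}_j'\hat{v}_j\hat{v}_j^\top$, where $\hat{\lambda}_j'$ is the projection of $\hat{\lambda}_j$ onto $[\lambda_r,\lambda_s]$ for $r\leq j\leq s$, onto $(-\infty,\lambda_{r-1}]$ for $j\leq r-1$, and onto $[\lambda_{s+1},+\infty)$ for $j\geq s+1$. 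Since $\lambda_j$ itself lies in the target set for each $j$, one has $|\hat{\lambda}_j'-\hat{\lambda}_j|\leq|\hat{\lambda}_j-\lambda_j|$, so the Hoffman--Wielandt theorem gives $\|\hat{\Sigma}'-\hat{\Sigma}\|_F^2=\sum_j(\hat{\lambda}_j'-\hat{\lambda}_j)^2\leq\|E\|_F^2$ (and Weyl gives $\|\hat{\Sigma}'-\hat{\Sigma}\|_2\leq\|E\|_2$), hence $\|\hat{\Sigma}'-\Sigma\|\leq 2\|E\|$ in both norms. Now $\hat{\Sigma}'$ has the same eigenvectors as $\hat{\Sigma}$, and its eigenvalues outside the block are separated by the full gap $\delta$ from $[\lambda_r,\lambda_s]$, so the classical $\sin\Theta$ theorem applied to the pair $(\Sigma,\hat{\Sigma}')$ with residual $(\hat{\Sigma}'-\Sigma)V$ yields $\|V_\perp^\top\hat{V}\|_F\leq\|(\hat{\Sigma}'-\Sigma)V\|_F/\delta\leq 2\min(p^{1/2}\|E\|_2,\|E\|_F)/\delta$ uniformly, with no case split at all. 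Some ingredient of this kind (an eigenvalue-clamped auxiliary matrix plus Hoffman--Wielandt, or an equivalent device) is what your sketch is missing, and without it the claimed constant $2\sqrt{2}$ is out of reach in the large-perturbation regime.
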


\begin{corollary}\label{subspace}
    Fix any $x,y\in U(x^*,\delta)$ in Assumption \ref{original_asm}. Denote the orthogonal projections $\mathcal N(x)=\sum_{i=1}^k u_{x,i}u_{x,i}^\top$ and $\mathcal N(y)=\sum_{i=1}^k u_{y,i}u_{y,i}^\top$ where $\{u_{x,i}\}_{i=1}^k$ and $\{u_{y,i}\}_{i=1}^k$ are two sets of orthonormal eigenvectors corresponding to the first k smallest eigenvalues of $\nabla^2 E(x)$ and $\nabla^2 E(y)$, respectively, i.e., $\nabla^2 E(x)u_{x,i}=\lambda_{x,i}u_{x,i}$, $\nabla^2 E(y)u_{y,i}=\lambda_{y,i}u_{y,i}$, $i=1,\ldots,k$, $\{\lambda_{x,i}\}_{i=1}^k$ and $\{\lambda_{y,i}\}_{i=1}^k$ are coresponding eigenvalues. Then
    \[ \|\mathcal N(x) - \mathcal N(y)\|_2 \leq \frac{\sqrt{2k}M}{\mu}\|x-y\|_2. \]
\end{corollary}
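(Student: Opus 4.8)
The plan is to apply the Davis–Kahan variant (Theorem \ref{dkthm}) directly, taking $\Sigma = \nabla^2 E(x)$ and $\hat\Sigma = \nabla^2 E(y)$, and then to convert the resulting Frobenius-norm estimate into the desired operator-norm bound. The first step is purely bookkeeping about eigenvalue orderings: Theorem \ref{dkthm} lists eigenvalues in \emph{decreasing} order, whereas Assumption \ref{original_asm}(ii) lists them increasingly, so the $k$ smallest eigenvalues of $\nabla^2 E(x)$ occupy the index range $r=1$, $s=k$ in the theorem. This gives $p=s-r+1=k$, together with $V=[u_{x,1},\dots,u_{x,k}]$ and $\hat V=[u_{y,1},\dots,u_{y,k}]$, so that $VV^\top=\mathcal N(x)$ and $\hat V\hat V^\top=\mathcal N(y)$ in the notation of the corollary.

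Next I would verify the gap hypothesis of Theorem \ref{dkthm}. With $r=1$ the lower gap $\lambda_r-\lambda_{r-1}$ equals $+\infty$ under the convention $\lambda_0=-\infty$, so the binding quantity is the spectral gap $\lambda_{k+1}-\lambda_k$ of $\nabla^2 E(x)$. Assumption \ref{original_asm}(ii) forces $\lambda_k\le-\mu$ and $\lambda_{k+1}\ge\mu$, since $|\lambda_i|\in[\mu,L]$ and the sign pattern is fixed, whence $\lambda_{k+1}-\lambda_k\ge 2\mu>0$. Thus the hypothesis holds and the denominator appearing in Theorem \ref{dkthm} is bounded below by $2\mu$.

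Finally I would assemble the estimate. Choosing the operator-norm term $p^{1/2}\|\hat\Sigma-\Sigma\|_2=k^{1/2}\|\nabla^2E(y)-\nabla^2E(x)\|_2$ in the minimum of Theorem \ref{dkthm}, and invoking the Lipschitz bound $\|\nabla^2E(x)-\nabla^2E(y)\|_2\le M\|x-y\|_2$ from Assumption \ref{original_asm}(i), I obtain $\|\mathcal N(x)-\mathcal N(y)\|_F\le \frac{2\sqrt2\,k^{1/2}M}{2\mu}\|x-y\|_2=\frac{\sqrt{2k}\,M}{\mu}\|x-y\|_2$. Since $\|A\|_2\le\|A\|_F$ for every matrix, this immediately yields the claimed operator-norm bound. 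The only genuinely delicate point is the ordering/gap conversion in the first two steps: one must be careful that the relevant gap is measured between the $k$-th and $(k+1)$-th eigenvalues, which the sign condition guarantees is at least $2\mu$, rather than within the possibly-degenerate block $\lambda_1\le\dots\le\lambda_k$. The degeneracies inside that block are harmless, because Theorem \ref{dkthm} controls the projection onto the whole $k$-dimensional invariant subspace and not the individual eigenvectors.
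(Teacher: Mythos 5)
Your proof is correct and follows essentially the same route as the paper: both apply Theorem \ref{dkthm} with $\Sigma=\nabla^2E(x)$, $\hat\Sigma=\nabla^2E(y)$, $r=1$, $s=k$, bound the spectral gap below by $2\mu$ via Assumption \ref{original_asm}(ii), pick the $p^{1/2}\|\hat\Sigma-\Sigma\|_2$ term, invoke the Lipschitz bound, and finish with $\|A\|_2\le\|A\|_F$. The only nitpick is your remark that the theorem orders eigenvalues ``decreasingly'': since it writes $\lambda_d\ge\cdots\ge\lambda_1$, index $1$ is still the smallest eigenvalue, so no reindexing is actually needed, and your choice $r=1$, $s=k$ is exactly right.
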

\begin{proof}
    We apply Theorem \ref{dkthm} by setting $\Sigma = \nabla^2 E(x)$, $V=[u_{x,1},\ldots, u_{x,k}]$ and $\hat{\Sigma} = \nabla^2 E(y)$, $\hat{V}=[u_{y,1},\ldots, u_{y,k}]$. Then $VV^\top = \mathcal N(x)$ and $\hat{V}\hat{V}^\top = \mathcal N(y)$. By (ii) of Assumption \ref{original_asm}, $\lambda_{x,k}\leq -\mu<\mu\leq\lambda_{x,k+1}$. Hence $\min(\lambda_{x,k+1}-\lambda_{x,k}, \lambda_{x,1}-\lambda_{x,0}) \geq 2\mu$. We have
    \[\|\mathcal N(x) - \mathcal N(y)\|_2 \leq \|\mathcal N(x) - \mathcal N(y)\|_F \leq \frac{2\sqrt{2k}}{2\mu}  \|\nabla^2 E(x) - \nabla^2 E(y)\|_2 \,  \]
    where the first inequality follows because the matrix 2-norm is less than Frobenius norm, and the second from the application of Theorem \ref{dkthm}. We end our proof by (ii) of Assumption \ref{original_asm}.
\end{proof}

We refer the following useful lemma to support our analysis.

\begin{lemma}\cite[Theorem 5]{wang2021modular}
	Let $A: = \begin{bmatrix}
		&(1+\gamma)I-\beta G &-\gamma I\\
		&I &0
	\end{bmatrix}\in \mathbb{R}^{2d\times 2d}$ where $G\in\mathbb{R}^{d\times d}$ is a positive semidefinite matrix. If $\beta\in(0,2/\lambda_{\max}(G))$, then the operator norm of $A^k$ could be bounded as $\|A^k\|_2 \leq D_0(\sqrt{\gamma})^k$ for $1\geq \gamma>\!\max\{(1-\sqrt{\beta\lambda_{\max}(G)})^2, (1-\sqrt{\beta \lambda_{\min}(G)})^2\}$
    where
    \[D_0: = \frac{2(1+\gamma)}{\sqrt{\min\{h(\gamma,\beta\lambda_{\min}(G)),h(\gamma,\beta \lambda_{\max}(G))\}}}\geq 1,\]
   $\lambda_{\max}(G)$ and $\lambda_{\min} (G)$ are the largest and the smallest eigenvalues of G, respectively, and the function $h(\gamma,z)$ is defined as $h(\gamma,z):=-(\gamma-(1-\sqrt{z})^2)(\gamma-(1+\sqrt{z})^2)$.
    \label{mat_vec_bound}
\end{lemma}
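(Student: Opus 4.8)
The plan is to reduce the bound on $\|A^k\|_2$ to a family of decoupled $2\times 2$ problems by exploiting the symmetry of $G$, and then to estimate the $k$-th power of each $2\times 2$ block through an explicit trigonometric formula. Since $G$ is symmetric positive semidefinite, write $G=Q\Lambda Q^\top$ with $Q$ orthogonal and $\Lambda=\mathrm{diag}(\lambda_1,\dots,\lambda_d)$, $\lambda_i=\lambda_i(G)\ge 0$. Conjugating $A$ by the orthogonal matrix $\mathrm{diag}(Q,Q)$ replaces $G$ by $\Lambda$, and a subsequent permutation (also orthogonal) that interleaves the $i$-th coordinate of the two halves turns $A$ into a block-diagonal matrix whose blocks are
\[ A_i=\begin{bmatrix} 1+\gamma-\beta\lambda_i & -\gamma\\ 1 & 0\end{bmatrix},\qquad 1\le i\le d. \]
Because orthogonal similarity and permutation preserve the spectral norm and commute with taking powers, $\|A^k\|_2=\max_{1\le i\le d}\|A_i^k\|_2$, so it suffices to bound each $\|A_i^k\|_2$.

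Writing $z_i:=\beta\lambda_i$, the characteristic polynomial of $A_i$ is $x^2-(1+\gamma-z_i)x+\gamma$, whose discriminant equals $(1+\gamma-z_i)^2-4\gamma=-h(\gamma,z_i)$. I would first check that the hypotheses force $h(\gamma,z_i)>0$ for every $i$: viewed as a function of $z$, $h(\gamma,z)=4\gamma-(1+\gamma-z)^2$ is concave, the map $z\mapsto(1-\sqrt z)^2$ attains its maximum over $[\beta\lambda_{\min}(G),\beta\lambda_{\max}(G)]$ at an endpoint, and the assumption $\gamma>\max\{(1-\sqrt{\beta\lambda_{\max}(G)})^2,(1-\sqrt{\beta\lambda_{\min}(G)})^2\}$ together with $\gamma\le 1$ and $\beta<2/\lambda_{\max}(G)$ then places each $z_i$ in the open interval $((1-\sqrt\gamma)^2,(1+\sqrt\gamma)^2)$ on which the discriminant is negative. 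Consequently each $A_i$ has complex-conjugate eigenvalues $\sqrt\gamma\,e^{\pm\mathrm i\theta_i}$ — the product of the eigenvalues is $\det A_i=\gamma$, so the modulus is exactly $\sqrt\gamma$ — with $\cos\theta_i=(1+\gamma-z_i)/(2\sqrt\gamma)$, and hence $\sin\theta_i=\sqrt{h(\gamma,z_i)}/(2\sqrt\gamma)$.

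With $\rho:=\sqrt\gamma$, I would then invoke the closed form for powers of a $2\times 2$ matrix with distinct eigenvalues (Cayley--Hamilton, or the Chebyshev recursion on the trace),
\[ A_i^k=\rho^{k-1}\frac{\sin k\theta_i}{\sin\theta_i}A_i-\rho^{k}\frac{\sin(k-1)\theta_i}{\sin\theta_i}I. \]
Substituting the entries of $A_i$ and simplifying the $(1,1)$ entry with $2\cos\theta_i\sin k\theta_i=\sin(k+1)\theta_i+\sin(k-1)\theta_i$ yields remarkably clean expressions for all four entries of $A_i^k$, each of the form $\rho^{\,\cdot}\sin(\cdot\,\theta_i)/\sin\theta_i$. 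Bounding the Frobenius norm by replacing every squared sine with $1$ collapses the sum of squares into $\rho^{2k-2}(\rho^2+1)^2/\sin^2\theta_i$, so that, using $\rho^2=\gamma$ and $1/\sin\theta_i=2\sqrt\gamma/\sqrt{h(\gamma,z_i)}$,
\[ \|A_i^k\|_2\le\|A_i^k\|_F\le\frac{\rho^{k-1}(1+\gamma)}{\sin\theta_i}=\frac{2(1+\gamma)}{\sqrt{h(\gamma,z_i)}}\,(\sqrt\gamma)^k. \]

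Finally, the concavity of $h(\gamma,\cdot)$ gives $h(\gamma,z_i)\ge\min\{h(\gamma,\beta\lambda_{\min}(G)),h(\gamma,\beta\lambda_{\max}(G))\}$ for every $i$, and taking the maximum over $i$ produces the stated constant $D_0$; the inequality $D_0\ge1$ follows from $\min h\le4\gamma$ and $1+\gamma\ge2\sqrt\gamma$. I expect the delicate point to be the third step: a naive submultiplicative estimate $\|A_i^k\|_2\le\rho^{k-1}(\|A_i\|_2+\rho)/\sin\theta_i$ is too lossy to recover the sharp prefactor $2(1+\gamma)$ — indeed $\|A_i\|_2+\sqrt\gamma$ can exceed $1+\gamma$ — so it is precisely the trigonometric simplification of the entries followed by the Frobenius-norm bound, rather than a crude operator-norm estimate, that makes the constant come out exactly right.
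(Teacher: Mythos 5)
Your proposal is correct, but there is nothing in the paper to compare it against: the paper does not prove this lemma at all, it imports it verbatim as \cite[Theorem 5]{wang2021modular} and uses it as a black box (the paper's own contribution starts only at Corollary \ref{gen_lemma2_2}, which relaxes the exact eigenvalues to bounds). So your argument should be judged as a standalone re-derivation of the cited result, and as such it holds up. The reduction via $\mathrm{diag}(Q,Q)$-conjugation and interleaving permutation to the $2\times 2$ companion-type blocks $A_i$ is the standard heavy-ball decomposition and is valid since both operations are orthogonal and commute with taking powers; your identity $(1+\gamma-z)^2-4\gamma=-h(\gamma,z)$ checks out algebraically; the convexity of $z\mapsto(1-\sqrt z)^2$ correctly localizes the hypothesis on $\gamma$ to the endpoints, forcing $h(\gamma,z_i)>0$ for every $i$ (with the tacit understanding that if $\lambda_{\min}(G)=0$ the admissible set of $\gamma$ is empty and the statement is vacuous, so the eigenvalues of each $A_i$ are genuinely non-real and distinct); the Lagrange/Chebyshev power formula, the trigonometric simplification of the four entries, and the Frobenius bound then give exactly $\|A_i^k\|_2\le 2(1+\gamma)h(\gamma,z_i)^{-1/2}(\sqrt\gamma)^k$, and the concavity of $h(\gamma,\cdot)$ yields the stated constant $D_0$, with $D_0\ge 1$ following from $h\le 4\gamma$ and AM--GM as you say. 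Your closing observation is also apt: a crude similarity-plus-conditioning or submultiplicative estimate would not produce the clean prefactor $2(1+\gamma)/\sqrt{h}$ free of any factor of $k$; it is the exact entrywise trigonometric form of $A_i^k$ that makes the constant come out right, which is precisely why the result is usually quoted rather than reproved.
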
\par
Based on Lemma \ref{mat_vec_bound}, we derive a generalized result using  the bounds of eigenvalues rather than their exact values.
\begin{corollary}
	Assume that the eigenvalues of $G$ in Lemma \ref{mat_vec_bound} satisfy $0<\mu\leq \lambda_{\min}(G) \leq \lambda_{\max}(G) \leq L$. Then $\|A^k\|_2 \leq C_0(\sqrt{\gamma})^k$ for $1\geq \gamma>\max\{(1-\sqrt{\beta L})^2, (1-\sqrt{\beta \mu})^2\}$ if $\beta\in(0,2/L)$, where $A$ and $h$ are defined in Lemma \ref{mat_vec_bound} and
	\[C_0: = \frac{2(1+\gamma)}{\sqrt{\min\{h(\gamma, \beta\mu), h(\gamma, \beta L)\}}}\geq 1.\]

In particular, choosing 
\begin{equation}\label{para}
 \sqrt{\beta}= \frac{2}{\sqrt{L}+\sqrt{\mu}},~~\sqrt{\gamma}=1-\frac{2}{\sqrt{\kappa}+1}+\varepsilon
\end{equation}
 with $\varepsilon\in(0, \frac{1}{\sqrt{\kappa}+1})$  and $\kappa := L/\mu\geq 1$ leads to
	\begin{equation}\label{c0K}
 C_0\leq\frac{2\sqrt{2}(\sqrt{\kappa}+1)^{1/2}}{\sqrt{3}\varepsilon}=:K.
\end{equation}
	\label{gen_lemma2_2}
\end{corollary}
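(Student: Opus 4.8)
The plan is to deduce the general bound directly from Lemma \ref{mat_vec_bound}, by checking that the hypotheses phrased with the \emph{bounds} $\mu,L$ imply those phrased with the \emph{actual} extreme eigenvalues of $G$, and that the resulting constant $D_0$ is dominated by $C_0$; then to substitute the explicit parameters (\ref{para}) and estimate $C_0$ by a direct computation. First I would verify the hypotheses of Lemma \ref{mat_vec_bound}. Since $\lambda_{\max}(G)\leq L$, one has $(0,2/L)\subseteq(0,2/\lambda_{\max}(G))$, so $\beta\in(0,2/L)$ gives $\beta\in(0,2/\lambda_{\max}(G))$. For the threshold on $\gamma$, I would note that $g(z):=(1-\sqrt{\beta z})^2$ satisfies $g''(z)=\tfrac{\sqrt\beta}{2}z^{-3/2}>0$, hence $g$ is convex on $[\mu,L]$ and attains its maximum there at an endpoint; since $\lambda_{\min}(G),\lambda_{\max}(G)\in[\mu,L]$, this yields $\max\{g(\lambda_{\min}(G)),g(\lambda_{\max}(G))\}\leq\max\{g(\mu),g(L)\}$, so the condition $\gamma>\max\{(1-\sqrt{\beta L})^2,(1-\sqrt{\beta\mu})^2\}$ forces the condition required by the lemma. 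Lemma \ref{mat_vec_bound} then gives $\|A^k\|_2\leq D_0(\sqrt\gamma)^k$.

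The next step is to show $D_0\leq C_0$. As the two constants share the numerator $2(1+\gamma)$, it suffices to prove $\min\{h(\gamma,\beta\lambda_{\min}(G)),h(\gamma,\beta\lambda_{\max}(G))\}\geq\min\{h(\gamma,\beta\mu),h(\gamma,\beta L)\}$. The key simplification is that, writing $w=\sqrt{\beta z}$ and expanding the product defining $h$, one obtains the single quadratic $h(\gamma,\beta z)=4\beta z-(\gamma-1-\beta z)^2=:\phi(z)$, with $\phi''(z)=-2\beta^2<0$. Thus $\phi$ is concave on $[\mu,L]$, so it lies above the chord joining $(\mu,\phi(\mu))$ and $(L,\phi(L))$, whence $\phi(z)\geq\min\{\phi(\mu),\phi(L)\}$ for every $z\in[\mu,L]$, in particular at $z=\lambda_{\min}(G)$ and $z=\lambda_{\max}(G)$. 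This gives the required inequality between the minima, hence $D_0\leq C_0$ and $\|A^k\|_2\leq C_0(\sqrt\gamma)^k$ (together with $C_0\geq D_0\geq 1$).

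For the explicit choice (\ref{para}), set $\rho:=\frac{\sqrt\kappa-1}{\sqrt\kappa+1}$. A direct computation gives $1-\sqrt{\beta\mu}=\rho$ and $1-\sqrt{\beta L}=-\rho$, so both thresholds equal $\rho^2$, while $(1+\sqrt{\beta\mu})^2=(2-\rho)^2$ and $(1+\sqrt{\beta L})^2=(2+\rho)^2$. Since $\sqrt\gamma=\rho+\varepsilon$, one checks $\gamma=(\rho+\varepsilon)^2>\rho^2$ (the $\gamma$-threshold) and $\gamma<1$ (from $\varepsilon<\frac{1}{\sqrt\kappa+1}<1-\rho=\frac{2}{\sqrt\kappa+1}$). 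Factoring yields
\[
h(\gamma,\beta\mu)=\varepsilon(2\rho+\varepsilon)(2+\varepsilon)(2-2\rho-\varepsilon),\qquad
h(\gamma,\beta L)=\varepsilon(2\rho+\varepsilon)(2-\varepsilon)(2+2\rho+\varepsilon),
\]
whose difference is proportional to $8\rho\geq 0$, so the minimum is $h(\gamma,\beta\mu)$. The decisive estimate is $2-2\rho-\varepsilon=\frac{4}{\sqrt\kappa+1}-\varepsilon>\frac{3}{\sqrt\kappa+1}$; combining it with $2+\varepsilon>2$, $2\rho+\varepsilon\geq\varepsilon$, and $(1+\gamma)^2<4$ gives
\[
C_0^2=\frac{4(1+\gamma)^2}{h(\gamma,\beta\mu)}<\frac{16}{\varepsilon\cdot\varepsilon\cdot 2\cdot\frac{3}{\sqrt\kappa+1}}=\frac{8(\sqrt\kappa+1)}{3\varepsilon^2}=K^2,
\]
i.e. $C_0\leq K$.

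The hard part will be the comparison $D_0\leq C_0$, which rests entirely on recognizing the concavity of $z\mapsto h(\gamma,\beta z)$; this is only transparent after the non-obvious algebraic collapse of the product defining $h$ into the quadratic $4\beta z-(\gamma-1-\beta z)^2$, and without it the monotonicity comparison is awkward. In the explicit-parameter part the delicate points are correctly identifying $h(\gamma,\beta\mu)$ (rather than $h(\gamma,\beta L)$) as the minimum via the $8\rho$ computation, and isolating the clean bound $2-2\rho-\varepsilon>\frac{3}{\sqrt\kappa+1}$, which is exactly what produces the constants $3$ and $\sqrt\kappa+1$ appearing in $K$ in (\ref{c0K}); the remaining estimates are routine.
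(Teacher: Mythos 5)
Your proposal is correct and follows essentially the same route as the paper: the first part rests on the concavity of $z\mapsto h(\gamma,\beta z)$ (which the paper simply asserts as ``$h$ is a concave quadratic'' and you verify via the identity $h(\gamma,\beta z)=4\beta z-(\gamma-1-\beta z)^2$) together with a convexity argument placing the $\gamma$-threshold maximum at the endpoints, and the second part is the same direct computation with the explicit parameters, differing only in bookkeeping (you identify $h(\gamma,\beta\mu)$ as the smaller value via the $8\rho$ difference, while the paper bounds both $h$-values below by the common quantity $\varepsilon^2(2+\varepsilon)\tfrac{3}{\sqrt{\kappa}+1}$), and both yield the same constant $K$.
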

\begin{proof}
Since $h$ is a concave quadratic function with respect to $z$ and $h(\gamma, z)>0$ if $\gamma$ is chosen under the assumptions and $z\in[\beta\mu,\beta L]$, the minimum of $h(\gamma,\cdot)$ on an interval must occur at end points of this interval. Consequently, $0<\mu\leq \lambda_{\min}(G) \leq \lambda_{\max}(G) \leq L$ leads to
$1\leq D_0 \leq C_0$ where $D_0$ is given in Lemma \ref{mat_vec_bound}.
Furthermore, since $|1-\sqrt{\beta}q|$ is a convex function with respect to $q$, we have
$$\max\{|1-\sqrt{\beta\lambda_{\min}(G)}|, |1-\sqrt{\beta\lambda_{\min}(G)}|\}\leq  \max\{|1-\sqrt{\beta\mu}|, |1-\sqrt{\beta L}|\},$$
 which proves the first statement of this corollary.

We then calculate $h(\gamma,\beta\mu)$ and $h(\gamma,\beta L)$ under the particular choice (\ref{para}) of the parameters, which satisfy the constraints of this corollary by direct calculations. Since 
	\begin{equation*}
		\begin{aligned}
		&
			\gamma - (1-\sqrt{\beta\mu})^2 = (1-\frac{2}{\sqrt{\kappa}+1}+\varepsilon)^2 - (1-\frac{2}{\sqrt{\kappa}+1})^2 = \varepsilon(2-\frac{4}{\sqrt{\kappa}+1}+\varepsilon)\geq \varepsilon^2,\\
    	&\begin{aligned}
    		 (1+\sqrt{\beta\mu})^2 - \gamma = (1+\frac{2}{\sqrt{\kappa}+1})^2-(1-\frac{2}{\sqrt{\kappa}+1}+\varepsilon)^2 &\\
&\hspace{-1in}= (\frac{4}{\sqrt{\kappa}+1}-\varepsilon)(2+\varepsilon)\geq \frac{3(2+\varepsilon)}{\sqrt{\kappa}+1},\end{aligned}
    	\end{aligned}
    \end{equation*}
    where we used $\frac{4}{\sqrt{\kappa}+1}\leq 2$ and $\varepsilon< \frac{1}{\sqrt{\kappa}+1}$,
    we have $h(\gamma,\beta\mu)\geq \varepsilon^2(2+\varepsilon)\frac{3}{\sqrt{\kappa}+1}$. Similarly, 
    \begin{equation*}
    	\begin{aligned}
    		\gamma - (1-\sqrt{\beta L})^2 &= \varepsilon(2-\frac{4}{\sqrt{\kappa}+1}+\varepsilon)\geq \varepsilon^2,\\
    		(1+\sqrt{\beta L})^2 -\gamma &= (2-\varepsilon)(2+\varepsilon+\frac{2\sqrt{\kappa}-2}{\sqrt{\kappa}+1}) \geq (2-\varepsilon)(2+\varepsilon)\geq \frac{3(2+\varepsilon)}{\sqrt{\kappa}+1},
    	\end{aligned}
    \end{equation*}
	which lead to $h(\gamma,\beta L)\geq \varepsilon^2(2+\varepsilon)\frac{3}{\sqrt{\kappa}+1}$. Consequently, $C_0$ could be further bounded as \[C_0 = \frac{2(1+\gamma)}{\sqrt{\min\{h(\gamma,\beta\mu),h(\gamma,\beta L)\}}}\leq\frac{2(1+\gamma)(\sqrt{\kappa}+1)^{1/2}}{\sqrt{3\varepsilon^2(\varepsilon+2)}}\leq \frac{2\sqrt{2}(\sqrt{\kappa}+1)^{1/2}}{\sqrt{3}\varepsilon}, \]
	which completes the proof.
\end{proof} 

We now turn to state our main results. We first reformulate the recurrence relation of the proposed algorithm in Theorem \ref{meta}, based on which we prove the convergence rate of the proposed method in Theorem \ref{main_discrete}. We make the following assumption on Algorithm \ref{hhisd} in the analysis: 

\begin{assumption}\label{eig_assm}
	$\big\{v_i^{(n)}\big\}_{i=1}^k$ are exact eigenvectors corresponding to the first $k$ smallest eigenvalues of $\nabla^2 E(x^{(n)})$ in each iteration in Algorithm \ref{hhisd}, i.e. the error in ``EigenSol'' in Algorithm \ref{hhisd} is neglected.
\end{assumption}

The Assumption \ref{eig_assm} helps to simplify the numerical analysis and highlight the ideas and techniques in the derivation of convergence rates. In practice, approximate eigenvectors computed from the ``EigenSol'' in Algorithm \ref{hhisd} should be considered but leads to more technical calculations in analyzing the convergence rates, see e.g. \cite{luo2022convergence}. This extension will be investigated in the future work.


\begin{theorem}\label{meta}
	The dynamics of $x$ in Algorithm \ref{hhisd} could be written as 
	 \begin{equation}
		\begin{bmatrix}
			 x^{(n+1)}-x^* \\   x^{(n)}-x^*\\
		\end{bmatrix} = 
		\begin{bmatrix}
			(1+\gamma)I - \beta A_* &-\gamma I\\
			 I &0
		\end{bmatrix}
		\begin{bmatrix}
			 x^{(n)}-x^* \\   x^{(n-1)}-x^*\\
		\end{bmatrix} + 
		\begin{bmatrix}
			 p^{(n)} \\
			0
		\end{bmatrix}
		\label{vec_rec}
	\end{equation}
where $	A_* = \mathcal P^*\nabla^2 E(x^*)$, $p_n = \beta(A_* - \tilde{A}_n)(x^{(n)}-x^*)$ and 
$\tilde{A}_n = \mathcal P_n\int_0^1 \nabla^2 E(x^{*} + t(x^{(n)} - x^*))dt$ with $$\mathcal P_{n}:=I-2\sum_{i=1}^kv_i^{(n)}{{}v_i^{(n)}}^\top,~~\mathcal P^*:=I-2\sum_{i=1}^kv_i^{*}{{}v_i^{*}}^\top,$$
where $\{v_i^{*}\}_{i=1}^k$ are eigenvectors corresponding to the first $k$ smallest eigenvalues of $\nabla^2 E(x^{*})$.

Furthermore, if $x^{(n)}\in U(x^*,\delta)$ mentioned in {Assumption \ref{original_asm}} and the Assumption \ref{eig_assm} holds, we have
\begin{equation}
    \|p^{(n)}\|_2\leq\beta C_1\|x^{(n)}-x^*\|^2_2,~~C_1:=\left(\frac{2\sqrt{2k}L}{\mu}+\frac{1}{2}\right
    )M.
	\label{res_estimation}
\end{equation}
\end{theorem}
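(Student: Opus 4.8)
The plan is to establish the two assertions separately: first the algebraic recurrence (\ref{vec_rec}), which is an exact rewriting of the update in Algorithm \ref{hhisd}, and then the quadratic bound (\ref{res_estimation}) on the remainder term, which is where all the analytic content sits.

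For the recurrence, I would start from the position update $x^{(n+1)} = x^{(n)} - \beta\mathcal P_n\nabla E(x^{(n)}) + \gamma(x^{(n)}-x^{(n-1)})$, subtract $x^*$ from both sides, and use that $x^*$ is a critical point so $\nabla E(x^*)=0$. The fundamental theorem of calculus applied to $\nabla E$ along the segment from $x^*$ to $x^{(n)}$ gives $\nabla E(x^{(n)}) = \bar H_n\,(x^{(n)}-x^*)$ with $\bar H_n := \int_0^1\nabla^2 E(x^*+t(x^{(n)}-x^*))\,dt$, so that $\mathcal P_n\nabla E(x^{(n)}) = \tilde A_n(x^{(n)}-x^*)$. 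Writing $\tilde A_n = A_* + (\tilde A_n - A_*)$ splits the update into a part driven by $A_*$ and the residual $p^{(n)} = \beta(A_*-\tilde A_n)(x^{(n)}-x^*)$; combining this with the trivial identity $x^{(n)}-x^* = I(x^{(n)}-x^*)$ for the second block row produces exactly (\ref{vec_rec}). This part is purely bookkeeping and should go through cleanly.

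For the estimate, since $p^{(n)} = \beta(A_*-\tilde A_n)(x^{(n)}-x^*)$, it suffices to show $\|A_*-\tilde A_n\|_2\leq C_1\|x^{(n)}-x^*\|_2$, after which (\ref{res_estimation}) follows by submultiplicativity. The key step is the telescoping decomposition
\begin{equation*}
A_* - \tilde A_n = \mathcal P^*\nabla^2E(x^*) - \mathcal P_n\bar H_n = \mathcal P^*\big(\nabla^2E(x^*)-\bar H_n\big) + (\mathcal P^*-\mathcal P_n)\bar H_n,
\end{equation*}
which separates the perturbation of the Hessian from the perturbation of the projection. I would then bound the three factors: $\|\mathcal P^*\|_2=1$ because $\mathcal P^*$ is an orthogonal reflection built from orthonormal $\{v_i^*\}$; the averaged Hessian difference satisfies $\|\nabla^2E(x^*)-\bar H_n\|_2 \leq \int_0^1 Mt\,dt\,\|x^{(n)}-x^*\|_2 = \tfrac{M}{2}\|x^{(n)}-x^*\|_2$ by the Lipschitz bound (i) of Assumption \ref{original_asm} (using convexity of the ball $U(x^*,\delta)$ so the segment stays inside); and $\|\bar H_n\|_2\leq L$ from the eigenvalue bound (ii). Collecting terms yields $C_1 = \big(\tfrac{2\sqrt{2k}L}{\mu}+\tfrac12\big)M$.

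The main obstacle is controlling the projection term $\|\mathcal P^*-\mathcal P_n\|_2$. Here I would invoke Assumption \ref{eig_assm} to identify $\sum_{i=1}^k v_i^{(n)}{v_i^{(n)}}^\top$ with the exact spectral projection $\mathcal N(x^{(n)})$ and $\sum_{i=1}^k v_i^*{v_i^*}^\top$ with $\mathcal N(x^*)$, so that $\mathcal P^*-\mathcal P_n = 2(\mathcal N(x^{(n)})-\mathcal N(x^*))$; Corollary \ref{subspace} then delivers $\|\mathcal P^*-\mathcal P_n\|_2\leq \tfrac{2\sqrt{2k}M}{\mu}\|x^{(n)}-x^*\|_2$. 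This is the delicate ingredient, since it is precisely the Davis--Kahan eigenvector-sensitivity estimate that converts the Hessian Lipschitz bound into a subspace Lipschitz bound, and it is where the exact-eigenvector assumption is essential — any inexactness in ``EigenSol'' would break the identification of $\mathcal P_n$ with a genuine spectral projection and require a separate perturbation analysis.
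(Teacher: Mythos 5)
Your proposal is correct and follows essentially the same route as the paper: the identical Taylor-integral rewriting of the update, the same splitting $\tilde A_n = A_* + (\tilde A_n - A_*)$, and the same two-term telescoping of $A_* - \tilde A_n$ into a Hessian-perturbation part (bounded by $\tfrac{M}{2}\|x^{(n)}-x^*\|_2$ via Lipschitz continuity) plus a projection-perturbation part (bounded via Corollary \ref{subspace} under Assumption \ref{eig_assm}), yielding the same constant $C_1$. The only difference is cosmetic: you insert the intermediate term $\mathcal P^*\bar H_n$ whereas the paper inserts $\mathcal P_n\nabla^2E(x^*)$, which are interchangeable since both require only $\|\mathcal P\|_2=1$ and an $L$-bound on the relevant Hessian factor.
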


\begin{proof}
		By the integral residue of the Taylor expansion, we obtain
	\begin{equation*}
		\nabla E(x^{(n)}) - \nabla E(x^{*}) = \left [\int_0^1 \nabla^2 E(x^{*} + t (x^{(n)} - x^*))dt \right ](x^{(n)} - x^*).
	\end{equation*}
As $x^*$ is a critical point, we apply $\nabla E(x^*)=0$ to reformulate the dynamics of $x$ in Algorithm \ref{hhisd} as
	\begin{equation}
		\begin{aligned}
			x^{(n+1)}-x^* & = x^{(n)}-x^*-\beta\mathcal P_n\nabla E(x^{(n)})  + \gamma(x^{(n)} - x^{(n-1)})\\
			& = \left(I - \beta \tilde{A}_n\right)(x^{(n)} - x^{*}) + \gamma(x^{(n)} - x^{(n-1)})\\
			& = \left[(1+\gamma)I - \beta\tilde{A}_n\right](x^{(n)} - x^{*}) - \gamma(x^{(n-1)} - x^{*})
		\end{aligned}
		\label{recur}
	\end{equation}
	where $\tilde{A}_n$ is given in this theorem. We then transform (\ref{recur}) into the matrix form to obtain a single step formula
	\begin{equation}
		\begin{bmatrix}
			 x^{(n+1)}-x^* \\   x^{(n)}-x^*\\
		\end{bmatrix} = 
		\begin{bmatrix}
			(1+\gamma)I - \beta\tilde{A}_n &-\gamma I\\
			 I &0
		\end{bmatrix}
		\begin{bmatrix}
			 x^{(n)}-x^* \\   x^{(n-1)}-x^*\\
		\end{bmatrix}.
		\label{recurr_f}
	\end{equation}
	We finally split $\tilde{A}_n$ as $A_*+(\tilde{A}_n-A_*)$ in this equation to obtain (\ref{vec_rec}).
	
	We turn to estimate $\|p^{(n)}\|_2$ for $x^{(n)}\in U(x^*,\delta)$. As	 $\|p^{(n)}\|_2\leq \beta\|A_*-\tilde{A}_n\|_2\|x^{(n)}-x^*\|_2,
	$
    it suffices to bound $\|A_*-\tilde{A}_n\|_2$, which could be decomposed as 
	\begin{equation*}
		   \|A_*-\tilde{A}_n\|_2  \leq \|A_*-B_n\|_2 + \|B_n -\tilde{A}_n \|_2,~~B_n := \mathcal P_n\nabla^2E(x^*).
	\end{equation*}
	By $\|\mathcal P_n\|_2=1$ and the {\sc Assumption 1}, $\|B_n -\tilde{A}_n \|_2$ could be bounded as follows
	\begin{equation}
		\begin{aligned}
				\|B_n -\tilde{A}_n \|_2&=\|\nabla^2E(x^*) - \int_0^1 \nabla^2 E(x^*+t(x^{(n)}-x^*))dt\|_2\\
				&{\leq}\int_0^1\|\nabla^2E(x^*) - \nabla^2 E(x^*+t(x^{(n)}-x^*))\|_2dt\\
				&{\leq} \int_0^1 t dt M\|x^{(n)}-x^*\|_2=\frac{M}{2}\|x^{(n)}-x^*\|_2,
			\end{aligned}
		\label{b1}
	\end{equation}
where the second inequality is derived from the Lipschitz condition of the Hessian. On the other hand,
$\|A_*-B_n\|_2$ is estimated by using Assumption \ref{eig_assm}
	\begin{equation*}
		\begin{aligned}
				\|A_*-B_n\|_2 & \leq \|\nabla^2 E(x^*)\|_2\|\mathcal P^* -\mathcal P_n \|_2\\
				& = 2\|\nabla^2 E(x^*)\|_2\bigg\|\sum_{i=1}^kv_i^*{v_i^*}^\top -\sum_{i=1}^k\vv{i}\vt{i} \bigg\|_2\leq \frac{2\sqrt{2k}LM}{\mu}\|x^{(n)}-x^*\|_2.
			\end{aligned}
	\end{equation*} 
	The last inequality follows from $\|\nabla^2 E(x^*)\|_2\leq L$ and Corollary \ref{subspace}. Combining the above two equations we get (\ref{res_estimation}) and thus complete the proof.
\end{proof}

Based on the derived recurrence relation, we denote
$$F_{n+1} =
\begin{bmatrix}
	x^{(n+1)}-x^*\\ x^{(n)}-x^*
\end{bmatrix}, \quad
T=\begin{bmatrix}
		(1+\gamma)I - \beta A_* &-\gamma I\\
		 I &0
	\end{bmatrix},\quad
	P_n =  
	\begin{bmatrix}
		p^{(n)}\\ 0
	\end{bmatrix}
	$$
	such that (\ref{vec_rec}) could be simply represented as 
	\begin{equation}
		F_{n+1} = TF_n +P_n.
		\label{short}
	\end{equation}

\begin{theorem}\label{main_discrete}
	Suppose the Assumptions \ref{original_asm} and \ref{eig_assm} hold. Given $\varepsilon\in(0,\frac{1}{\sqrt{\kappa}+1})$, $x^{(-1)}=x^{(0)}$, if  $\|x^{(0)}-x^*\|_2\leq \frac{\hat{r}}{2\sqrt{2} K}$, $\beta $ and $\gamma $ are chosen as (\ref{para}), then $x^{(n)}$ converges to $x^*$ with the estimate on the convergence rate
		\begin{equation}
			\|x^{(n)}-x^*\|_2 \leq 2\sqrt{2}K\|x^{(0)}-x^*\|_2\theta^n,~~n\geq 0,~~\theta:=1-\frac{2}{\sqrt{\kappa}+1}+2\varepsilon,
			\label{main_result}
		\end{equation}
	where $\hat{r}<\min\{\delta, \frac{\sqrt{3}\mu\varepsilon^2(\sqrt{\kappa}+1)^{3/2}}{16\sqrt{2}C_1}\}$, $C_1$ is given in (\ref{res_estimation}) and $K$ is defined in (\ref{c0K}).
\end{theorem}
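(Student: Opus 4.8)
The plan is to view the one-step recursion (\ref{short}) as a linear system driven by the quadratic forcing $P_n$, controlling the homogeneous propagator $T$ spectrally while absorbing the forcing by induction. The first step is to apply Corollary \ref{gen_lemma2_2} to the block matrix $T$, which is exactly the matrix of that corollary with $G=A_*$. To license this I must verify the hypotheses on $A_* = \mathcal P^*\nabla^2 E(x^*)$. Because $\{v_i^*\}_{i=1}^k$ are eigenvectors of $\nabla^2 E(x^*)$, the matrix $\mathcal P^*=I-2\sum_{i=1}^k v_i^*{v_i^*}^\top$ commutes with $\nabla^2 E(x^*)$; hence $A_*$ is symmetric and diagonal in the same eigenbasis, with eigenvalues $-\lambda_i^*$ for $i\le k$ and $\lambda_i^*$ for $i>k$. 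By Assumption \ref{original_asm}(ii) applied at $x^*$, all of these equal $|\lambda_i^*|\in[\mu,L]$ and are positive, so $A_*$ is symmetric positive definite with spectrum in $[\mu,L]$. With the choice (\ref{para}), Corollary \ref{gen_lemma2_2} then gives $\|T^m\|_2\le K(\sqrt\gamma)^m$ for all $m\ge 0$.

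Unrolling (\ref{short}) yields $F_{n+1}=T^{n+1}F_0+\sum_{j=0}^n T^{n-j}P_j$, and since $\|x^{(n)}-x^*\|_2\le\|F_n\|_2$ it suffices to bound $\|F_n\|_2$. I would prove $\|x^{(n)}-x^*\|_2\le R\theta^n$ with $R:=2\sqrt2\,K\|x^{(0)}-x^*\|_2$ by induction on $n$; the base case $n=0$ is immediate since $K\ge1$. Observe that the hypothesis $\|x^{(0)}-x^*\|_2\le \hat r/(2\sqrt2 K)$ gives $R\le\hat r<\delta$, so whenever $\|x^{(j)}-x^*\|_2\le R\theta^j\le\hat r$ for $j\le n$, all relevant iterates stay in $U(x^*,\delta)$ and Assumptions \ref{original_asm}, \ref{eig_assm} together with the residual bound (\ref{res_estimation}) of Theorem \ref{meta} apply. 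Note also $\theta\in(0,1)$ since $\varepsilon<1/(\sqrt\kappa+1)$, so the claimed bound entails $x^{(n)}\to x^*$.

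In the inductive step I would split $\|F_{n+1}\|_2$ into the homogeneous and forcing contributions and control each by $\tfrac12 R\theta^{n+1}$. With $\|F_0\|_2=\sqrt2\|x^{(0)}-x^*\|_2$ and $\sqrt\gamma\le\theta$, the homogeneous part obeys $\|T^{n+1}F_0\|_2\le K(\sqrt\gamma)^{n+1}\sqrt2\|x^{(0)}-x^*\|_2\le\tfrac12 R\theta^{n+1}$. For the forcing, the decisive trick is to write the quadratic bound (\ref{res_estimation}) as a product of one decaying factor and one constant factor, namely $\|P_j\|_2\le\beta C_1\|x^{(j)}-x^*\|_2^2\le\beta C_1\hat r\,(R\theta^j)$, using the induction hypothesis on one copy of $\|x^{(j)}-x^*\|_2$ and $\|x^{(j)}-x^*\|_2\le\hat r$ on the other. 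Then $\sum_{j=0}^n\|T^{n-j}\|_2\|P_j\|_2\le K\beta C_1\hat r R\sum_{j=0}^n(\sqrt\gamma)^{n-j}\theta^j$, and because $\theta-\sqrt\gamma=\varepsilon>0$ the geometric sum satisfies $\sum_{j=0}^n(\sqrt\gamma)^{n-j}\theta^j\le\theta^{n+1}/\varepsilon$. Substituting $\beta=4/[\mu(\sqrt\kappa+1)^2]$ and $K$ from (\ref{c0K}) shows that the stated threshold $\hat r<\frac{\sqrt3\,\mu\varepsilon^2(\sqrt\kappa+1)^{3/2}}{16\sqrt2\,C_1}$ is precisely $\hat r<\varepsilon/(2K\beta C_1)$, which forces the forcing contribution below $\tfrac12 R\theta^{n+1}$. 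Summing the two bounds gives $\|x^{(n+1)}-x^*\|_2\le\|F_{n+1}\|_2\le R\theta^{n+1}$ and closes the induction.

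The principal difficulty is that $P_n$ is quadratic in the error, so (\ref{short}) is a genuinely nonlinear recursion for which a bare spectral bound on $T$ is insufficient; the locality of the assumptions further requires that the iterates never leave $U(x^*,\delta)$. The induction-hypothesis method addresses both at once, but its success rests on two quantitative choices I would foreground: the splitting of the quadratic residual into ``$R\theta^j$ times the constant $\hat r$'', which linearizes the forcing sum, and the appearance of the acceleration gap $\varepsilon=\theta-\sqrt\gamma$ in the denominator of the geometric sum, which is exactly what allows the residual to be matched against $\theta^{n+1}$ at the cost of a factor $1/\varepsilon$ that the smallness threshold on $\hat r$ then cancels.
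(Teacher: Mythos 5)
Your proposal is correct and follows essentially the same route as the paper's proof: unrolling $F_{n+1}=T^{n+1}F_0+\sum_j T^{n-j}P_j$, verifying $A_*$ is symmetric positive definite with spectrum in $[\mu,L]$ so that Corollary \ref{gen_lemma2_2} bounds $\|T^m\|_2\le K(\sqrt{\gamma})^m$, and closing a nonlinear induction by splitting the quadratic residual as $\|x^{(j)}-x^*\|_2^2\le \hat r\cdot R\theta^j$, summing the geometric series with gap $\theta-\sqrt{\gamma}=\varepsilon$, and checking that the threshold on $\hat r$ equals $\varepsilon/(2K\beta C_1)$. The only difference is bookkeeping: the paper runs a simultaneous two-part induction ($\mathbb{A}$) and ($\mathbb{B}$) on $\|F_n\|_2$, whereas you run a single induction on $\|x^{(n)}-x^*\|_2$ and recover the localization bound from $R\le\hat r<\delta$, which is an equivalent repackaging.
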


\begin{proof}
	We derive from (\ref{short}) that 
	\begin{equation}\label{rec}
		F_{n+1} = T^{n+1}F_0 + \sum_{j=0}^n T^{n-j}P_j.
	\end{equation}
   In order to employ Corollary \ref{gen_lemma2_2} to bound $T^m$ for $0\leq m\leq n+1$ in (\ref{rec}), we need to show that $A_*$ is a positive definite matrix. By the spectral decomposition $ \nabla^2E(x^*) = \sum_{i=1}^d \lambda_i^*v_i^*{v_i^*}^\top$ we have 
    \begin{equation}
           	A_* 
           	= \left(I - 2\sum_{i=1}^k v_i^*{v_i^*}^\top\right)\sum_{j=1}^d \lambda_j^*v_j^*{v_j^*}^\top\\
           	=\sum_{i=1}^d h_i^*v_i^*{v_i^*}^\top,
    \end{equation}
    where $h_i^*=-\lambda_i^*$ for $ i=1,...,k$ and $h_j^*=\lambda_j^*$ for $ j=k+1,...,d$. Since $x^*$ is an index-$k$ saddle point, $h_i^*>0$ for $1\leq i\leq d$. By Assumption \ref{original_asm} and Corollary \ref{gen_lemma2_2},
    \begin{equation}
    	\|T^{n+1}F_0\|_2 \leq K(\sqrt{\gamma})^{n+1}\|F_0\|_2,
    	\label{part_1}
    \end{equation}
    and 
    \begin{equation}
    	\Big\|\sum_{j=0}^n T^{n-j}P_j\Big\|_2 \leq \sum_{j=0}^n \|T^{n-j}P_j\|_2\leq \sum_{j=0}^n K (\sqrt{\gamma})^{n-j}\|P_j\|_2=\sum_{j=0}^n K (\sqrt{\gamma})^{n-j}\|p^{(j)}\|_2.
    	\label{power_1}
    \end{equation}
%

To get (\ref{main_result}), it suffices to prove ($\mathbb{A}$)  $\|F_n\|_2\leq 2K\theta^n\|F_0\|_2$ for all $n\geq 0$ with the support of the estimates (\ref{part_1})--(\ref{power_1}). However, in order to bound $p^{(j)}$ in (\ref{power_1}) via (\ref{res_estimation}), which is derived based on the properties in Assumption 1 that are valid only if $\|x^{(j)}-x^*\|_2<\delta$, we also need to show that ($\mathbb{B}$) $\|F_n\|_2\leq \hat{r}(<\delta)$ (such that $\|x^{(n)}-x^*\|_2<\delta$) for all $n\geq 0$. Therefore, we intend to prove the two estimates ($\mathbb{A}$) and ($\mathbb{B}$) simultaneously by induction.

 For $n=0$, both ($\mathbb{A}$) and ($\mathbb{B}$) hold by the assumptions of this theorem and $K\geq C_0\geq 1$ (cf. Corollary \ref{gen_lemma2_2}). Suppose both ($\mathbb{A}$) and ($\mathbb{B}$) hold for $0\leq n\leq m$ for some $m\geq 1$, we obtain from (\ref{power_1}) that
    \begin{equation}
    	\begin{aligned}
    		\Big\|\sum_{j=0}^m T^{m-j}P_j\Big\|_2 
    		&{\leq} \sum_{j=0}^m KC_1\beta(\sqrt{\gamma})^{m-j} \|F_j\|_2^2\\
    		&\hspace{-0.5in}{\leq} 2\beta K^2C_1\hat{r}\sum_{j=0}^m (\sqrt{\gamma})^{m-j}\theta^j \|F_0\|_2{\leq} \frac{2\beta KC_1\hat{r}}{\theta-\sqrt{\gamma}}\cdot K\theta^{m+1}\|F_0\|_2 {\leq} K\theta^{m+1}\|F_0\|_2,
    	\end{aligned}
    \label{part_2}
    \end{equation}
    where in the first inequality we use results in Theorem \ref{meta}, in the second inequality we bound $\|F_j\|_2^2$ by the induction hypotheses ($\mathbb{A}$) and ($\mathbb{B}$), in the third inequality we apply the estimate 
$$\sum_{j=0}^n (\sqrt{\gamma})^{n-j}\theta^j = \theta^n \sum_{j=0}^n \Big(\frac{\sqrt{\gamma}}{\theta}\Big)^{n-j}\leq \frac{\theta^{n+1}}{\theta-\sqrt{\gamma}}$$ 

and in the last inequality we use the assumption $\hat{r}<\min\{\delta, \frac{\sqrt{3}\mu\varepsilon^2(\sqrt{\kappa}+1)^{3/2}}{16\sqrt{2}C_1}\}$ and the definitions of $K$, $\beta$, $\theta$ and $\gamma$ to bound 

$$\frac{2\beta KC_1}{\theta-\sqrt{\gamma}}\hat{r} =\frac{2C_1\hat r}{\varepsilon}\Big(\frac{2}{\sqrt{L}+\sqrt{\mu}}\Big)^2 \frac{2\sqrt{2}(\sqrt{\kappa}+1)^{1/2}}{\sqrt{3}\varepsilon}= \frac{16\sqrt{2}C_1}{\sqrt{3}\mu\varepsilon^2(\sqrt{\kappa}+1)^{3/2}}\hat{r} <1.$$

 Combining (\ref{part_1}), (\ref{part_2}) and $\sqrt{\gamma}<\theta=\sqrt{\gamma}+\varepsilon$, we have 
    $
    	\|F_{m+1}\|_2 \leq 2K\theta^{m+1} \|F_0\|_2
   $ (i.e. the induction hypothesis $(\mathbb{A})$ with $n=m+1$),
  which, together with $\theta<1$ and $\|F_0\|_2=\sqrt{2}\|x^{(0)}-x^*\|_2\leq \frac{\hat r}{2K}$,  leads to 
   $$
   \|F_{m+1}\|_2 \leq 2K\cdot\frac{\hat{r}}{2K}=\hat{r}.
 $$
  That is,  ($\mathbb{A}$) and ($\mathbb{B}$) hold for $n=m+1$ and thus for any $n\geq 0$ by induction, which completes the proof.
\end{proof}

\begin{remark}
If we choose $\varepsilon = \frac{1}{2(\sqrt{\kappa}+1)}$ in Theorem \ref{main_discrete}, the convergence rate $\theta=1-\frac{1}{\sqrt{\kappa}+1}$, which is faster than the $1-\mathcal{O}(\frac{1}{\kappa})$ convergence rate of the standard discrete HiSD proved in \cite{luo2022convergence}, especially for large $\kappa$, i.e. ill-conditioned cases.
\end{remark}

\section{Numerical experiments}
We present a series of numerical examples to demonstrate the efficiency of the A-HiSD algorithm in comparison with the HiSD (i.e. A-HiSD with $\gamma=0$ in Algorithm \ref{hhisd}). All experiments are performed using Python on a laptop with a 2.70-GHz Intel Core i7 processor, 16 GB memory. The same initial point will be used for both methods in each example. 

\subsection{M\"uller-Brown potential}
We start with the well known M\"uller-Brown (MB) potential, a benchmark to test the performance of saddle point searching algorithms. The MB potential is given by \cite{BonKob}
\begin{equation*}
	E_{MB}(x,y) = \sum_{i=1}^4 A_i \exp[a_i(x-\bar{x}_i)^2 + b_i(x-\bar{x}_i)(y-\bar{y}_i) + c_i(y-\bar{y}_i)^2],
\end{equation*}
where $A=[-200, -100, -170, 15]$, $a=[-1, -1, -6.5, 0.7]$, $b=[0, 0, 11, 0.6]$, $c=[-10, -10, -6.5, 0.7]$, $\bar{x}=[1,0,-0.5,-1]$ and $\bar{y}=[0,0.5,1.5,1]$. We show the contour plot of MB potential in Fig.\ref{MB1} with two local minimas marked by red stars and the saddle point connecting them marked by a yellow solid point, as well as and trajectories of A-HiSD with different momentum parameters $\gamma$.  The initial point is $(x^{(0)}, y^{(0)})=(0.15, 1.5)$, the step size $\beta$ is set as $2\times 10^{-4}$ and the SIRQIT is selected as the eigenvector solver.

As shown in Fig.\ref{MB1}, the trajectory of the HiSD method (i.e. the case $\gamma=0$) converges to the saddle point along the descent path. A-HiSD utilizes the momentum term to accelerate the movement along the way such that less configurations appear on the trajectory as we increase $\gamma$. We also show the convergence behavior of four cases in Fig.\ref{MB1}, which indicates that a larger $\gamma$ leads to the faster convergence rate. Since the problem is relatively well-conditioned, we do not need a very large $\gamma$ to accelerate the convergence. This is consistent with our  analysis. 

\begin{figure}[H]
	\centering
	\includegraphics[scale=0.36]{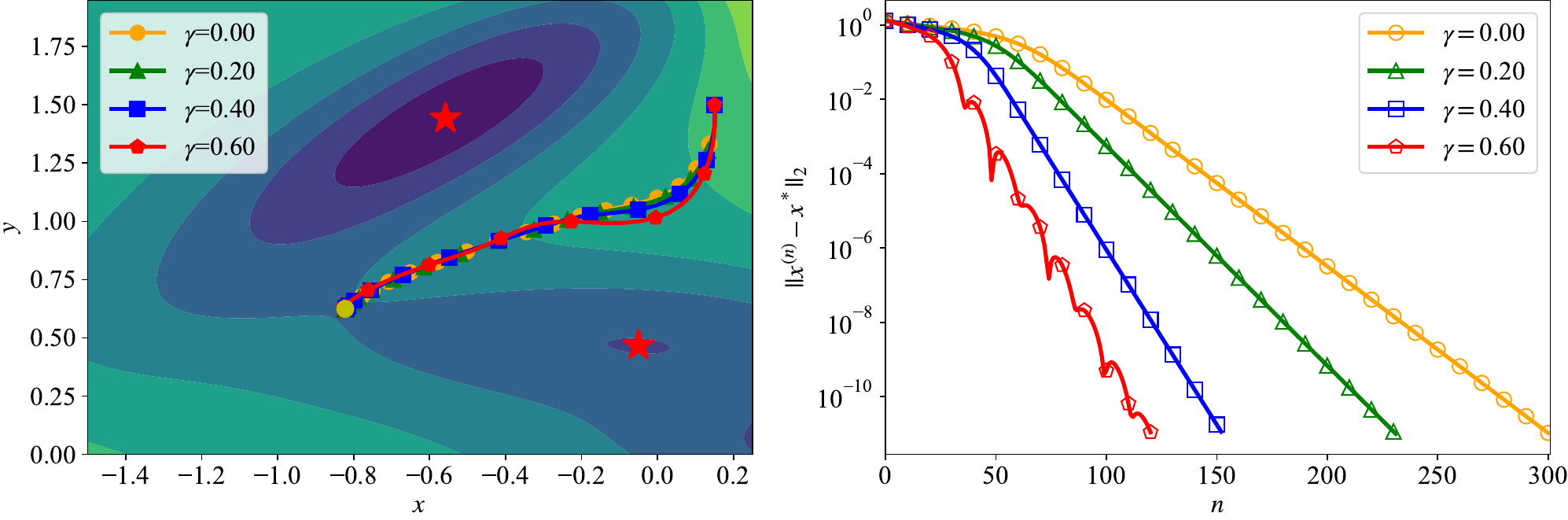}
   \caption{Left: Contour plot of MB potential and trajectories of the A-HiSD with different $\gamma$. For a clear visualization, we plot configurations every 5 steps along the trajectory. Right: Plots of $\|x^{(n)}-x^*\|_2$ with respect to the iteration number $n$. We stop if the error $\|x^{(n)} - x^*\|_2\leq 10^{-11}$.}\label{MB1}
\end{figure}

Next we demonstrate the acceleration of the A-HiSD method by applying the modified MB potential  \cite{BonKob}
\begin{equation*}
	E_{MMB}(x,y) = E_{MB}(x,y) + A_5\sin(xy)\exp[a_5(x-\bar{x}_5)^2 + c_5(y-\bar{y}_5)^2].
\end{equation*}
 The additional parameters are $A_5=500$, $a_5=c_5=-0.1$, $\bar{x}_5=-0.5582$ and $\bar{y}_5=1.4417$. The landscape of modified MB potential is more winding compared with the MB potential. Similarly, we implement the A-HiSD method with different momentum parameters $\gamma$, the initial point $(x^{(0)}, y^{(0)})=(0.053, 2.047)$ and the step size $\beta=10^{-4}$. From Fig.\ref{MB2} we find that the HiSD method (i.e. the case $\gamma=0$) follows the winding ascent path to approach the saddle point, which takes many gradient evaluations along the curved way, while A-HiSD method significantly relax the burden on gradient computations. By utilizing historical information, A-HiSD takes less iterations on the curved way to the saddle point. Furthermore, A-HiSD has the faster convergence rate as shown in Fig. \ref{MB2}.

\begin{figure}[H]
	\centering
	\includegraphics[scale=0.36]{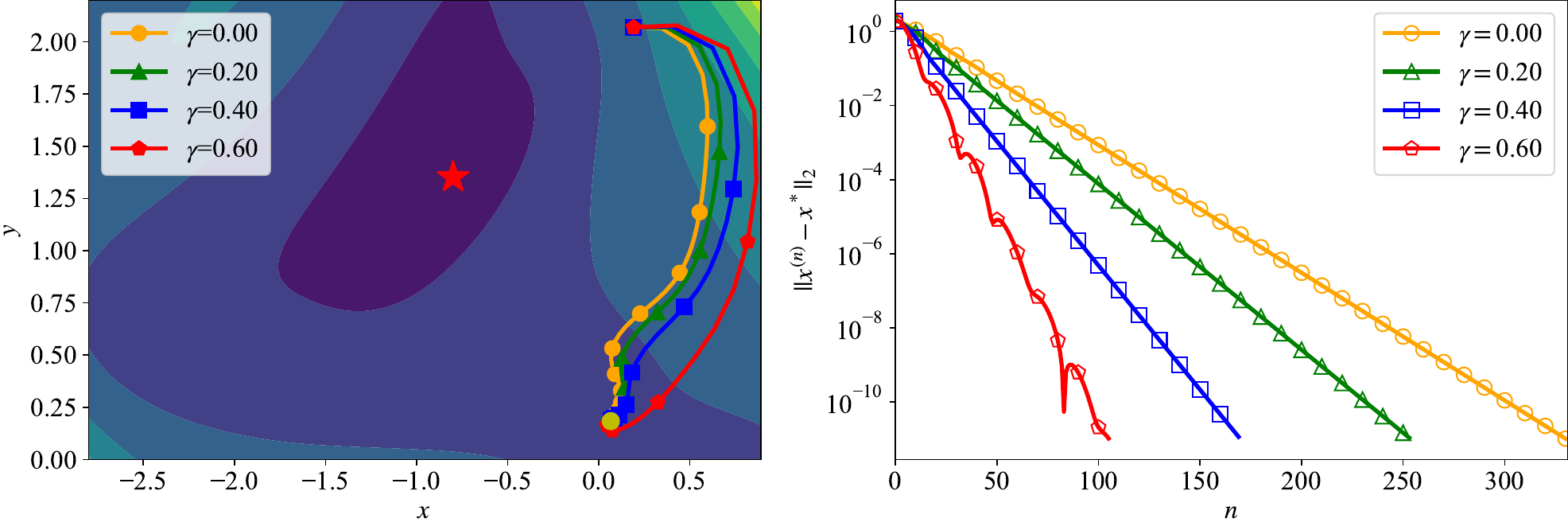}
    \caption{Left: Contour plot of modified MB potential and trajectories of the A-HiSD with different $\gamma$. For a clear visualization, we plot configurations every 5 steps along the trajectory. Right: Plots of $\|x^{(n)}-x^*\|_2$ with respect to the iteration number $n$.  We stop if the error $\|x^{(n)} - x^*\|_2\leq 10^{-11}$.}\label{MB2}
\end{figure}

\subsection{Rosenbrock type function}
The following experiments demonstrate the efficiency of A-HiSD method on high-dimensional ill-conditioned models. We consider the $d$-dimensional Rosenbrock function given by 
\begin{equation*}
	R(x) = \sum_{i=1}^{d-1}[ 100(x_{i+1}-x_i^2)^2 + (1-x_i)^2 ].
\end{equation*}
Here $x_i$ is the $i$th coordinate of the vector $x=[x_1,\cdots, x_d]^\top$. Note that $R(x)$ has a global minimum at $x^* = [1,\cdots,1]^\top$. We then modify $R(x)$ by adding extra quadratic arctangent terms such that $x^*$ becomes a saddle point \cite{luo2022convergence,yin2019high}
\begin{equation}
	R_m(x) = R(x) + \sum_{i=1}^d s_i\arctan^2(x_i - x^*_i).
\end{equation}
We set $d=1000$, $s_j = 1$ for $j>5$ and (i) $s_i=-500$ or (ii) $s_i=-50000$ for $1\leq i\leq 5$. We set $x^{(0)} = x^* + \rho \frac{n}{\|n\|_2}$ as the initial point where $n\sim \mathcal{N}(0, I_d)$ is a normal distribution vector and $\rho$ is set as $1.0$ for the case (i) and $0.1$ for the case (ii). 
 
We implement the A-HiSD under different $\gamma$ to compare the numerical performance. Convergence behaviors of the case (i) are shown in Fig.\ref{rosen1}. In this case, $x^*$ is an index-3 saddle point and the condition number $\kappa^*$ of $\nabla^2 R_m(x^*)$ is 721.93, which can be regraded as the approximation of $\kappa$ mentioned in Theorem \ref{main_discrete}. The step size $\beta$ is set as $2\times 10^{-4}$ and the eigenvector computation solver is set as the SIRQIT. Fig.\ref{rosen1} indicates that the HiSD (i.e. $\gamma=0$) has a slow converge rate, while increasing $\gamma$ leads to a faster convergence rate. The left part of Fig. \ref{rosen1} shows that $\gamma=0.6$ leads to the fastest convergence rate when $x^{(n)}$ is relatively far from $x^*$. Over increasing $\gamma$ may not be beneficial for the acceleration in this situation. As $x^{(n)}$ approaches $x^*$, the A-HiSD with $\gamma=0.95$ outperforms other cases, as shown in the right part of Fig.\ref{rosen1}. It achieves the stop criterion $\|x^{(n)} - x^*\|_2\leq 10^{-10}$  within 2000 iterations. In comparison, the case $\gamma=0$ requires more than 30000 iterations.
	
\begin{figure}[htbp]
	\centering
        \includegraphics[scale=0.36]{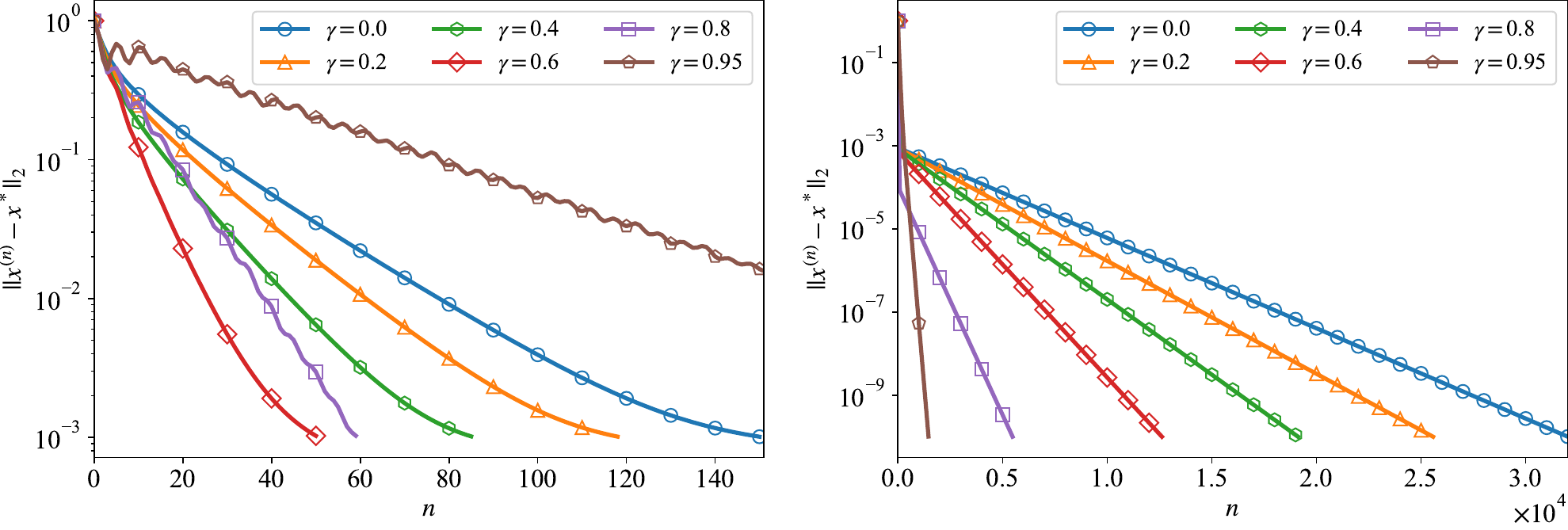}
	\caption{Plots of $||x^{(n)}-x^*||_2$ with respect to the iteration number $n$ for the modified Rosenbrock function with the condition (i). Condition number $\kappa^*$ at the index-3 saddle point $x^*$ is 721.93. Left: Stop if the error $\|x^{(n)} - x^*\|_2\leq 10^{-3}$. Right: Stop if the error $\|x^{(n)} - x^*\|_2\leq 10^{-10}$.}\label{rosen1}
\end{figure}

\begin{figure}[htbp]
	\centering
        \includegraphics[scale=0.36]{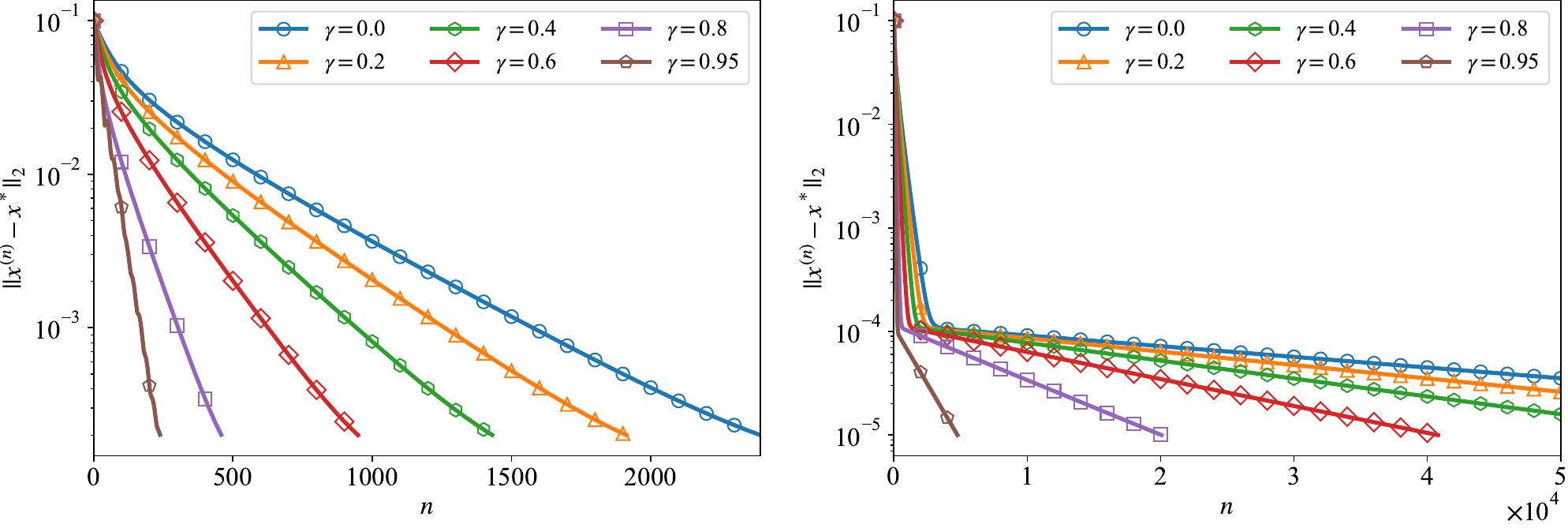}
	\caption{Plots of $||x^{(n)}-x^*||_2$ with respect to the iteration number $n$ for the modified Rosenbrock function with the condition (i). Condition number $\kappa^*$ at the index-5 saddle point $x^*$ is 39904.30. Left: Stop if the error $\|x^{(n)} - x^*\|_2\leq 2\times10^{-4}$. Right: Stop if the error $\|x^{(n)} - x^*\|_2\leq 10^{-5}$ or terminate when $n=50000$.}\label{rosen2}
\end{figure}

Results of the case (ii) are shown in Fig.\ref{rosen2}. In this case, $\nabla^2 R_m(x^*)$ is much more ill-conditioned compared with the case (i) since the condition number $\kappa^*$ is 39904.29. Due to the ill-condition, we have to choose a smaller step size $\beta=10^{-5}$ and select LOBPCG as the eigenvector solver.
 Meanwhile, $x^*$ becomes an index-5 saddle point. All these settings make the case (ii) more challenging. We observe from Fig.\ref{rosen2} that the convergence rate of  the HiSD method is very slow, while the momentum term helps to release the problem by introducing the historical gradient information. For instance, A-HiSD with $\gamma=0.95$ attains the stop criterion $\|x^{(n)}-x^*\|_2\leq 10^{-5}$ within only 6000 iterations. Furthermore, as discussed in our numerical analysis, larger $\gamma$ should be applied in ill-conditioned cases, which is consistent with the numerical observations.

\subsection{Modified strictly convex 2 function}
We compare the performance of A-HiSD method with another commonly-used acceleration strategy, i.e. the HiSD with the Barzilai-Borwein (BB) step size \cite{barzilai1988two}, which could accelerate the HiSD according to experiment results in \cite{zhang2016optimization,yin2019high}. 
We apply the modified strictly convex 2 function, a typical optimization test example proposed in \cite{raydan1997barzilai}
\begin{equation}
	S(x) = \sum_{i=1}^d s_ia_i(e^{x_i}-x_i)/10,
\end{equation}
where $a_i = 1+5(i-1)$ for $1\leq i\leq d$, $s_i=-1$ for $1\leq i\leq k$ and $s_j = 1$ for $k+1\leq j\leq d$. $x^* = [0,...,0]$ is an index-$k$ saddle point of $S(x)$. In our experiment, we set $d=100$ and $k=5$.

 We implement A-HiSD method with different momentum parameters $\gamma$ and compare with the HiSD with the BB step size $\beta_n$ in each iteration
\begin{equation}
	\beta_n = \min\left\{\frac{\tau}{\|\nabla S(x^{(n)})\|_2}, \frac{{ \Delta x^{(n)}}^\top \Delta g^{(n)}}{\|\Delta g^{(n)}\|_2^2}\right\},
	\label{bb_step_size}
\end{equation}
where $\Delta x^{(n)} = x^{(n)} - x^{(n-1)}$. $\Delta g^{(n)} = \nabla S(x^{(n)}) - \nabla S(x^{(n-1)})$ and $\tau=0.5$ to avoid the too large step size. SIRQIT is selected as the eigenvector solver. The initial point $x^{(0)}$ is set as $[-6,...,-6]$. 
\begin{figure}[h]
	\centering 
	\includegraphics[scale=0.45]{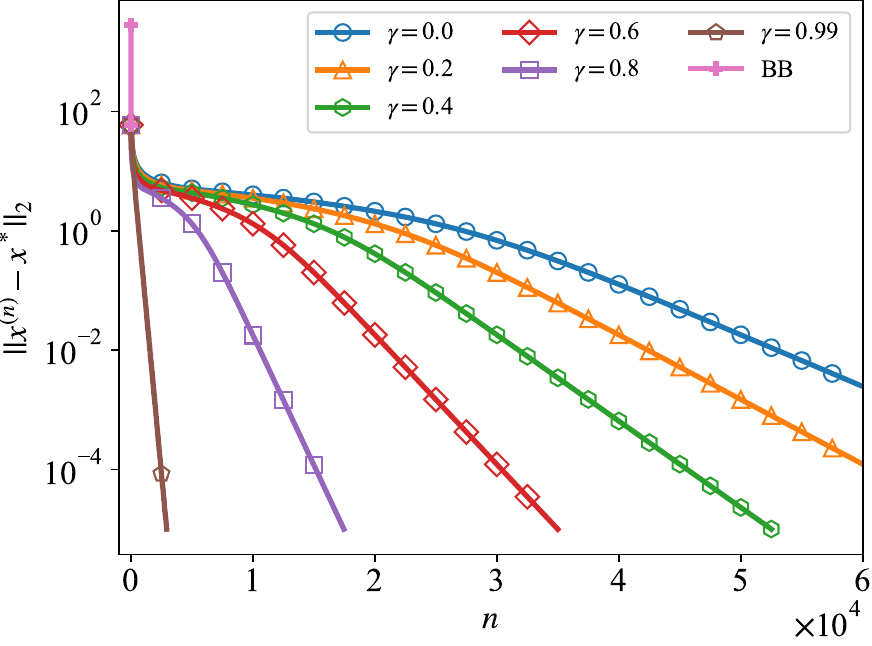}\vspace{-0.1in}
	\caption{Plots of $||x^{(n)}-x^*||_2$ with respect to the iteration number $n$ for the modified strictly convex 2 function. BB represents the HiSD method with the BB step size. We stop if the error $\|x^{(n)} - x^*\|_2\leq10^{-5}$ or terminate if $n=60000$.}
	\label{sc2}
\end{figure}

As shown in Fig.\ref{sc2}, the HiSD with the BB step size diverges within a few iterations, while the A-HiSD method still converges. 
Meanwhile, as we increase $\gamma$, the empirical convergence rate of the A-HiSD increases, which substantiates the effectiveness and the stability of the A-HiSD method in comparison with the HiSD with the BB step size.

\subsection{Loss function of neural network}
We implement an interesting, high-dimensional example of searching saddle points on the neural network loss landscape. {Due to the non-convex nature of loss function, multiple local minima and saddle points are main concerns of the neural network optimization \cite{dauphin2014identifying}. Recent research indicates that gradient-based optimization methods with small initializations can induce a phenomenon known as 'saddle-to-saddle' training process in various neural network architectures, including linear networks and diagonal linear networks \cite{NEURIPS2022_7eeb9af3,jacot2021saddle,pesme2023saddle}. In this process, network parameters may become temporarily stuck before rapidly transitioning to acquire new features. Additionally, empirical evidence suggests that vanilla stochastic gradient methods can converge to saddle points when dealing with class-imbalanced datasets—common occurrences in real-world datasets \cite{NEURIPS2022_8f4d70db}. Consequently, it is urgent to figure out the impact of saddle points in deep learning. Analyzing saddle point via computations can provide valuable insights into specific scenarios. } 

Because of the overparameterization of neural networks, most critical points of the loss function are highly degenerate, which contains many zero eigenvalues. This challenges saddle searching algorithms since it usually leads to the slow convergence rate in practice. Let $\{(x_i, y_i)\}_{i=1}^m$ with $x_{i}\in\mathbb{R}^{d_x}$ and $y_i\in\mathbb{R}^{d_y}$ be the training data. We consider a fully-connected linear neural network of depth $H$
\begin{equation}
	f_{linear}(\textbf{W}; x) = W_HW_{H-1}\cdots W_{2}W_{1}x,
\end{equation}
where $\textbf{W} = [W_1,W_2,...,W_H]$ with weight parameters $W_{i+1}\in\mathbb{R}^{d_{i+1}\times d_i}$ for $0\leq i \leq H-1$ with $d_0=d_x$ and $d_{H}=d_y$. The corresponding empirical loss $L$ is defined by \cite{Ach}
\begin{equation}
	L(\textbf{W}) = \sum_{i=1}^m\| f_{linear}(\textbf{W};x_i) - y_i\|_2^2 = \|W_HW_{H-1}\cdots W_{2}W_{1}X-Y\|_F^2. 
\end{equation} 
where $X=[x_1,...,x_m]$ and $Y=[y_1,...,y_m]$. In practice we could vectorize $W_1,\cdots,W_H$ row by row such that $L$ can be regarded as a function defined in $\mathbb{R}^{D}$ with $D = \sum_{i=0}^{H-1} d_id_{i+1}$. According to \cite{Ach}, $L$ contains a large amount of saddle points, which can be parameterzied by the data set $\{(x_i, y_i)\}_{i=1}^m$. For instance, if $d_i=d_0$ for $1\leq i \leq H-1$, then $\textbf{W}^*:=[W_1^*,\ldots,W_H^*]$ is a saddle point where
\begin{equation}
	W_1^*=\begin{bmatrix}
		 U_{\mathcal{S}}^\top\Sigma_{YX}\Sigma_{XX}^{-1}\\
		 0
	\end{bmatrix},\quad 
   W_h^* = I\text{ for }2\leq h\leq H-1,\text{ and }
    W_H^* = \begin{bmatrix}
    	U_{\mathcal{S}} &0
    \end{bmatrix}.
\label{cp}
\end{equation}
 Here $I$ is the identity matrix, $\mathcal{S}$ is an index subset of $\{1,2,...,r_{\max}\}$ for $r_{\max}=\min\{d_0,...,d_H\}$, 
 $\Sigma_{XX}=XX^\top$, $\Sigma_{YX}=YX^\top$, $\Sigma=\Sigma_{YX}\Sigma_{XX}^{-1}\Sigma_{YX}^\top$ and $U$ satisfies $\Sigma = U\Lambda U^\top$ with $\Lambda=\text{diag}(\lambda_1,...,\lambda_{d_y})$. It is assumed in \cite{Ach} that $\lambda_1>...>\lambda_{d_y}> 0$, which holds true under the data of this experiment. $U_\text{S}$ is then obtained by concatenating column vectors of $U$ according to the index set $\mathcal{S}$. 

\begin{figure}[htbp]
	\centering
	\includegraphics[width=6cm]{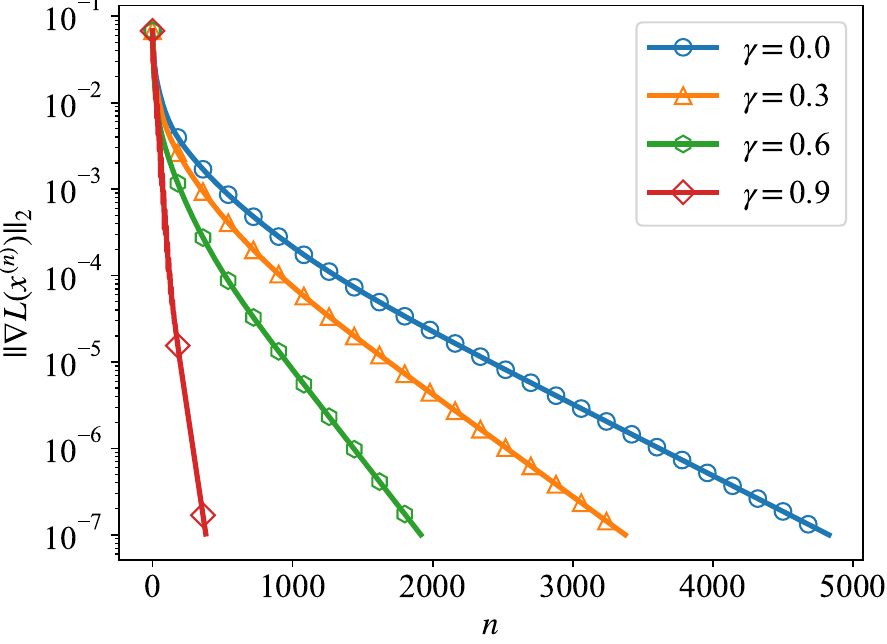}
	\caption{Plots of the gradient norm $||\nabla L(x^{(n)})||_2$ with respect to the iteration number $n$ for the loss function of the neural network. We stop if the error $\|L(x^{(n)})\|_2\leq10^{-7}$.}
	\label{linear_nn} 
\end{figure}

In this experiments, we set the depth $H=5$, the input dimension $d_x=10$, the output dimension $d_y=4$, $d_i=10$ for $1\leq i\leq 4$, and the number of data points $m=100$. Data points $(x_i,y_i)$ are drawn from the normal distributions $\mathcal{N}(0,I_{d_x})$ and $\mathcal{N}(0,I_{d_y})$, respectively. The initial point is $(W_1^{(0)},\cdots, W_H^{(0)}) = \textbf{W}^* + (V_1,\cdots, V_H)$ where $(V_1,\cdots, V_H)$ is a random perturbation whose elements $(V_h)_{i,j}$ are drawn from $\mathcal{N}(0,\sigma^2_h)$ independently, with $\sigma_h = 0.5\frac{\|W_h^*\|_F}{\sqrt{d_{h-1}d_h}}$. Under the current setting,  $\textbf{W}^*$ is a degenerate saddle point with 16 negative eigenvalues and several zero eigenvalues. Although A-HiSD is developed under the framework of non-degenerate saddle points, we show that this algorithm works well for the degenerate case. We set the step size $\beta=0.1$ and the LOBPCG is selected as the eigenvector solver. Due to the highly degenerate property of the loss landscape, $\textbf{W}^*$ is not an isolated saddle point but lies on a low-dimensional manifold. Hence we compute the gradient norm instead of the Euclidean distance as the accuracy measure.

From Fig.\ref{linear_nn} we find that the A-HiSD with $\gamma=0.9$ attains the tolerance $10^{-7}$ within 500 iterations, while that the HiSD (i.e. the case $\gamma=0$) takes about 5000 iterations.  In Table \ref{t_linear}, we report the number of iterations (denoted as ITER) and the computing time in seconds (denoted as CPU) of the the problem in details for the comparison. A faster convergence rate is achieved as we gradually increase $\gamma$, which indicates the great potential of A-HiSD method on the acceleration of convergence in highly degenerate problems.

\begin{table}[htbp]
\caption{The ITER and CPU of HiSD ($\gamma=0$) and A-HiSD (with $\gamma=0.3, 0.6, 0.9$) for searching degenerate index-16 saddle points of the loss function of the linear neural networks. The speedup equals to (CPU of HiSD)/(CPU of A-HiSD).}
\resizebox{\textwidth}{!}{
\begin{tabular}{cc|ccc|ccc|ccc}
\hline
\multicolumn{2}{c|}{$\gamma=0$}   & \multicolumn{3}{c|}{$\gamma=0.3$}                                & \multicolumn{3}{c|}{$\gamma=0.6$}                                & \multicolumn{3}{c}{$\gamma=0.9$}                                \\ \hline
\multicolumn{1}{c|}{CPU}   & ITER & \multicolumn{1}{c|}{CPU}   & \multicolumn{1}{c|}{ITER} & speedup & \multicolumn{1}{c|}{CPU}   & \multicolumn{1}{c|}{ITER} & speedup & \multicolumn{1}{c|}{CPU}  & \multicolumn{1}{c|}{ITER} & speedup \\ \hline
\multicolumn{1}{c|}{56.22} & 4830 & \multicolumn{1}{c|}{41.80} & \multicolumn{1}{c|}{3376} & 1.33    & \multicolumn{1}{c|}{25.77} & \multicolumn{1}{c|}{1917} & 2.15    & \multicolumn{1}{c|}{6.62} & \multicolumn{1}{c|}{382}  & 8.35    \\ \hline
\end{tabular}\label{t_linear}
}
\end{table}

\section{Concluding remarks}
We present the A-HiSD method, which integrates the heavy ball method with the HiSD method to accelerate the computation of saddle points. By employing the straightforward update formulation from the heavy ball method, the A-HiSD method achieves significant accelerations on various ill-conditioned problems without much extra computational cost. We establish the theoretical basis for A-HiSD at both continuous and discrete levels. Specifically, we prove the linear stability theory for continuous A-HiSD and rigorously prove the faster local linear convergence rate of the discrete A-HiSD in comparison with the HiSD method. These theoretical findings provides strong supports for the convergence and acceleration capabilities of the proposed method.

While we consider the A-HiSD method for the finite-dimensional gradient system in this paper,  it has the potential to be extended to investigate the non-gradient systems, which frequently appear in chemical and biological systems such as gene regulatory networks \cite{QIAO2019271}. Furthermore, this method can be adopted to enhance the efficiency of saddle point search for infinite-dimensional systems, such as the Landau-de Gennes free energy in liquid crystals \cite{han2021solution,Shi_2023}.
Lastly, the effective combination of HiSD and the heavy ball method inspires the integration of other acceleration strategies with HiSD, such as the Nesterov accelerated gradient method \cite{nesterov1983method} and the Anderson mixing \cite{anderson1965iterative}. We will investigate this interesting topic in the near future.
\\
\\

{\small 
	\noindent\textbf{Funding}  This work was supported by National Natural Science Foundation of China (No.12225102, T2321001, 12050002, 12288101 and 12301555), and the Taishan Scholars Program of Shandong Province.\\
	
	\noindent\textbf{Data Availability} The datasets generated and/or analysed during the current study are available from the corresponding author on reasonable request\\
}
\section*{Declarations}
	\noindent\textbf{Conflict of interest} The authors have no relevant financial interest to disclose.

\bibliographystyle{spmpsci}
\bibliography{reference}
\end{document}